\newtheorem{thm}{Theorem}[section]
\newtheorem{lem}{Lemma}[section]
\theoremstyle{definition}
\theoremstyle{remark}
\newtheorem{rem}{Remark}[section]
\numberwithin{equation}{section}
\newcommand{\bu}{\mathbf{u}}
\newcommand{\bv}{\mathbf{v}}
\newcommand{\bff}{\mathbf{f}}
\newcommand{\bvarphi}{\bm{\varphi}}
\newcommand{\bnu}{\bm{\nu}}
\newcommand{\norm}[1]{\left\Vert#1\right\Vert}
\newcommand{\GG}{\Gamma}
\newcommand{\Bx}{\mathbf{x}}
\newcommand{\By}{\mathbf{y}}
\newcommand{\bdelta}{\bm{\delta}}
\newcommand{\bE}{\mathbf{E}}
\newcommand{\Kcal}{\mathcal{K}}
\newcommand{\Scal}{\mathcal{S}}
\newcommand{\Mcal}{\mathcal{M}}
\newcommand{\RR}{\mathbb{R}}
\newcommand{\eqnref}[1]{(\ref {#1})}
\newcommand{\p}{\partial}
\newcommand{\beq}{\begin{equation}}
\newcommand{\eeq}{\end{equation}}
\DeclareMathAlphabet{\itbf}{OML}{cmm}{b}{it}
\def\bu{{{\itbf u}}}
\def\bv{{{\itbf v}}}
\title[Spectrum of N-P operator in elastostatics and plasmonic resonances]{On spectral properties of Neuman-Poincar\'e operator and plasmonic resonances in 3D elastostatics}
\author{Youjun Deng}
\address{School of Mathematics and Statistics, Central South University, Changsha, Hunan, P. R. China.}
\email{youjundeng@csu.edu.cn, dengyijun\_001@163.com}
\author{Hongjie Li}
\address{Department of Mathematics, Hong Kong Baptist University, Kowloon Tong, Hong Kong SAR.}
\email{hongjie$_-$li@yeah.net}
\author{Hongyu Liu}
\address{Department of Mathematics, Hong Kong Baptist University, Kowloon Tong, Hong Kong SAR.\vspace*{-4mm}}
\address{\vspace*{-4mm}and}
\address{HKBU Institute of Research and Continuing Education, Virtual University Park, Shenzhen, P. R. China.}
\email{hongyu.liuip@gmail.com}
\begin{document}
\maketitle

\begin{abstract}

We consider plasmon resonances and cloaking for the elastostatic system in $\mathbb{R}^3$ via the spectral theory of Neumann-Poincar\'e operator. We first derive the full spectral properties of the Neumann-Poincar\'e operator for the 3D elastostatic system in the spherical geometry. The spectral result is of significant interest for its own sake, and serves as a highly nontrivial extension of the corresponding 2D study in \cite{AJKKY15}. The derivation of the spectral result in 3D involves much more complicated and subtle calculations and arguments than that for the 2D case. Then we consider a 3D plasmonic structure in elastostatics which takes a general core-shell-matrix form with the metamaterial located in the shell. Using the obtained spectral result, we provide an accurate characterisation of the anomalous localised resonance and cloaking associated to such a plasmonic structure.

\medskip

\medskip

\noindent{\bf Keywords:}~~anomalous localized resonance, plasmonic material, negative elastic materials, elastostatics

\noindent{\bf 2010 Mathematics Subject Classification:}~~35B34; 74E99; 74J20

\end{abstract}

\section{Introduction}

\subsection{Background}

There has been growing interest in the mathematical study of plasmon materials. Those are a particular class of metamaterials that allow the presence of negative material parameters such as negative permittivity and permeability in electromagnetism, and negative density and refractive index in acoustics, etc.. Plasmon materials, a.k.a negative materials can find many important applications in science and technology including imaging resolution enhancement, invisibility cloaking and energy harvesting. We refer to \cite{Acm13,Ack13,AK,Bos10,Brl07,CKKL,Klsap,LLL,GWM1,GWM2,GWM3,GWM4,GWM6,GWM7,GWM8} for the relevant study in electrostatics governed by the Laplace equatioin, \cite{ADM,AMRZ,AKL,KLO} in acoustics governed by the Helmholtz equation and \cite{ARYZ} in electromagnetism governed by the Maxwell system.

Mathematically, the negativity of the material parameters breaks the ellipticity of the underlying PDO (partial differential operator). The non-elliptic PDO may possess non-trivial kernel which in turn induces resonance. The resonance is usually associated to an infinite-dimensional kernel of the non-elliptic PDO, and hence is referred to as {\it anomalous}. It is also interesting that the resonant field demonstrates a highly oscillatory behaviour, evidenced by the blowup of the associated energy of the underlying system. Furthermore, such a blowup behaviour is localised within a specific region with a sharp boundary not defined by any discontinuities in the material parameters, and the field outside that region converges to a smooth one as the loss parameter goes to zero. Due to those distinct features, it is referred to as the {\it anomalous localised resonance} in the literature. Another surprisingly interesting feature of the plasmonic resonance is that it strongly depends on the location of the forcing source.

Recently, the plasmon resonances were investigated for the linear elasticity governed by the Lam\'e system, where negative shear and bulk modulus are allowed to present \cite{AJKKY15,AKKY16,KM,LLBW,LiLiu2d,LiLiu3d}. In \cite{LiLiu2d,LiLiu3d}, the plasmon resonances in both two and three dimensions were considered. The argument is based on a variational argument using the primal and dual variational principles for the Lam\'e system. However, only energy blowup and dependence on the source location were shown by using the variational approach, and the localising and cloaking effects cannot be shown. In \cite{AJKKY15,AKKY16}, much more accurate characterisation of the anomalous localised resonance in the linear elasticity was established based on a spectral approach using the spectral properties of the Neumann-Poincar\'e (N-P) operator associated with the Lam\'e system. However, the corresponding study was only conducted in two dimensions and the major obstacle for the extension to the three-dimensional case is the lack of the spectral properties of the N-P operator associated with the Lam\'e system in $\mathbb{R}^3$. Here, we would like to emphasise that the N-P operator for the linear elasticity is not compact even on smooth domains. The derivation of the spectral properties of the two-dimensional N-P operator in \cite{AJKKY15} is highly technical. One of the main aims of the present paper is to derive the full spectral properties of the three-dimensional N-P operator for the linear elasticity in spherical geometry. As shall be seen that the derivation involves much more complicated and subtle calculations and arguments than that for the two-dimensional case. Then we consider a 3D plasmonic structure in elastostatics which takes a general core-shell-matrix form with the metamaterial located in the shell. Using the obtained spectral result, we provide an accurate characterisation of the anomalous localised resonance and cloaking associated to such a plasmonic structure.

\subsection{Mathematical setup}

For self-containedness, we next briefly introduce the mathematical formulation of the elastostatic system and the cloaking due to anomalous localised resonance in elastostatics. We also refer to \cite{AJKKY15,AKKY16,KM,LLBW,LiLiu2d,LiLiu3d} for more relevant discussions.

 Let $\mathbf{C}(\Bx):=(\mathrm{C}_{ijkl}(\Bx))_{i,j,k,l=1}^3$, $\Bx\in\mathbb{R}^3$, be a four-rank tensor such that
 \begin{equation}\label{eq:lame_constant}
 \mathrm{C}_{ijkl}(\Bx):=\lambda(\Bx)\bm{\delta}_{ij}\bdelta_{kl}+\mu(\Bx)(\bdelta_{ik}\bdelta_{jl}+\bdelta_{il}\bdelta_{jk}),\ \ \Bx\in\mathbb{R}^3,
 \end{equation}
 where $\lambda, \mu\in\mathbb{C}$ are complex-valued functions, and $\bdelta$ is the Kronecker delta. $\mathbf{C}(x)$ describes an isotropic elastic material tensor distributed in the space $\mathbb{R}^3$, where $\lambda$ and $\mu$ are referred to as the Lam\'e constants. For a regular elastic material, the Lam\'e constants are real-valued and satisfy the following strong convexity condition,
 \begin{equation}\label{eq:convex}
 \mu>0\qquad\mbox{and}\qquad 3\lambda+2\mu>0.
 \end{equation}
 For a plasmonic elastic material, the Lam\'e constants are allowed to be complex-valued with the real parts being negative and the imaginary parts signifying the loss parameters. In the sequel, we write $\mathbf{C}_{\mathbb{R}^3,\lambda,\mu}$ to specify the dependence of the elastic tensor on the Lam\'e parameters $\lambda$ and $\mu$, and the domain of interest $\mathbb{R}^3$. We shall also simply write $\mathbf{C}_{\lambda,\mu}$ for $\mathbf{C}_{\mathbb{R}^3,\lambda,\mu}$ if no confusion would arise in the context.

 Let $\Sigma$ and $\Omega$ be bounded domains in $\mathbb{R}^N$ with connected Lipschitz boundaries such that $\Sigma\Subset\Omega$. Let the elastic tensor $\mathbf{C}_{\lambda,\mu}$ be regular in the core $\Sigma$ and in the matrix $\mathbb{R}^3\backslash\overline{\Omega}$, whereas in the shell $\Omega\backslash\overline{\Sigma}$, the elastic tensor is plasmonic. We also assume that in the shell,
 \begin{equation}\label{eq:assum1}
 \Im\lambda(\Bx)=\Im\mu(\Bx)=\delta\quad\mbox{for}\ \ \Bx\in\Omega\backslash\overline{\Sigma},
 \end{equation}
 where $\delta\in\mathbb{R}_+$ is sufficiently small, signifying the lossy parameter.

Let $\mathbf{f}$ be an $\mathbb{R}^3$-valued function that is compactly supported outside $\Omega$ with a zero average,
\begin{equation}\label{eq:source1}
\int_{\mathbb{R}^3}\mathbf{f}(\Bx)\ d\Bx=0.
\end{equation}
$\mathbf{f}$ signifies an elastic source/forcing term.

Let $\mathbf{\bu}_\delta(x)\in\mathbb{C}^3$, $\Bx\in\mathbb{R}^3$, denote the displacement field in the space that is occupied by the elastic configuration $(\mathbf{C}_{{\lambda},{\mu}},\mathbf{f})$ described above. In the quasi-static regime, $\mathbf{\bu}_\delta(x)\in H_{\mathrm{loc}}^1(\mathbb{R}^3)^3$ verifies the following Lam\'e system
\begin{equation}\label{eq:lame1n1}
\begin{cases}
&\mathcal{L}_{\lambda,\mu} \mathbf{\bu}_\delta(\Bx)=\mathbf{f}(\Bx),\quad \Bx\in\mathbb{R}^3,\medskip\\
& \mathbf{\bu}_\delta|_-=\mathbf{\bu}_\delta|_+,\quad \partial_{\bm{\nu}_{\lambda,\mu}} \mathbf{\bu}_{\delta}|_-=\partial_{\bm{\nu}_{\lambda,\mu}} \mathbf{\bu}_{\delta}|_+\quad\mbox{on}\ \ \partial\Sigma\cup\partial\Omega,\medskip\\
&\mathbf{\bu}_\delta(\Bx)=\mathcal{O}\big(|\Bx|^{-1}\big)\quad\mbox{as}\ \ |\Bx|\rightarrow+\infty,
\end{cases}
\end{equation}
where the PDO $\mathcal{L}_{{\lambda},{\mu}}$ is defined by
\begin{equation}\label{eq:lame2}
\mathcal{L}_{{\lambda},{\mu}} \mathbf{\bu}_\delta:=\nabla\cdot\mathbf{C}_{{\lambda},{\mu}} {\hat{\nabla}}\mathbf{\bu}_\delta={\mu}\Delta\mathbf{\bu}_\delta+({\lambda}+{\mu})\nabla\nabla\cdot\mathbf{\bu}_\delta,
\end{equation}
with $\hat{\nabla}$ signifying symmetric gradient
\[
\hat{\nabla}\mathbf{\bu}_\delta:=\frac 1 2(\nabla\mathbf{\bu}_\delta+\nabla\mathbf{\bu}_\delta^t),
\]
and the superscript $t$ denoting the matrix transpose. In \eqref{eq:lame1n1}, the conormal derivative (or traction) is defined by
\begin{equation}\label{eq:normald}
\partial_{\bm{\nu}}\mathbf{\bu}_\delta=\frac{\partial \bu_\delta}{\partial \bm{\nu}}:= \lambda(\nabla \cdot \ \bu_\delta)\bm{\nu}+ \mu(\nabla \bu_\delta + \nabla \bu_\delta^t)\bm{\nu}\quad \mbox{on}\ \ \partial\Sigma\ \mbox{or}\ \partial \Omega,
\end{equation}
where $\bm{\nu}$ denotes the exterior unit normal to $\partial \Sigma/\partial \Omega$,
and the $\pm$ signify the traces taken from outside and inside of the domain $\Sigma/\Omega$, respectively.

Next, for $\mathbf{\bu}\in H^1_{\text{loc}}(\mathbb{R}^3)^3$ and $\mathbf{\bv}\in H^1_{\text{loc}}(\mathbb{R}^3)^3$, we introduce
\begin{equation}\label{eq:energy1}
\mathbf{P}_{\lambda,\mu}(\mathbf{\bu},\mathbf{\bv}):=\int_{\mathbb{R}^3}\big[\lambda(\nabla\cdot\mathbf{\bu})\overline{(\nabla\cdot\mathbf{\bv})}(\Bx)+2\mu\nabla^s\mathbf{u}:\overline{\nabla^s\mathbf{\bv}}(\Bx) \big]\ d \Bx,
\end{equation}
where and also in what follows, $\mathbf{A}:\mathbf{B}=\sum_{i,j=1}^3 a_{ij}b_{ij}$ for two matrices $\mathbf{A}=(a_{ij})_{i,j=1}^3$ and $\mathbf{B}=(b_{ij})_{i,j=1}^3$.
For the solution $\mathbf{u}_\delta$ to \eqref{eq:lame1n1}, we define
\begin{equation}\label{eq:energy2}
\mathbf{E}_\delta(\mathbf{C}_{{\lambda},{\mu}}, \mathbf{f}):=\frac{\delta}{2}\mathbf{P}_{\lambda,\mu}(\mathbf{\bu}_\delta, \mathbf{\bu}_\delta).
\end{equation}
Then cloaking due to anomalous localised resonance occurs if the following two conditions are satisfied:
\begin{equation}\label{eq:condition_CALR}
  \left\{
    \begin{array}{ll}
     \displaystyle{\limsup_{\delta\rightarrow+0}}\  \mathbf{E}(\bu_{\delta})\rightarrow \infty,\medskip \\
      |\bu_{\delta}(\Bx)|<C , \quad  |\Bx|>r',
    \end{array}
  \right.
\end{equation}
for some constants $C$ and $r'$ independent of $\delta$.

The rest of the paper is organised as follows. In Section 2, we present some preliminary knowledge on layer potential operators including the Neumann-Poincar\'e (N-P) operator for the Lam\'e system \eqref{eq:lame1n1}. Section 3 is devoted to the spectral properties of the N-P operator in spherical geometry. In Section 4, we consider the cloaking due to anomalous localised resonance in elastostatics. 

\section{Preliminaries on layer potentials}

We first introduce some function spaces that shall be needed in our subsequent study. Let $D$ be a bounded Lipscthiz domain with a connected complement in $\RR^3$. Let  $\nabla_{\p D}\cdot$ denote the surface divergence. Denote by $L_T^2(\p D):=\{\bvarphi\in {L^2(\p D)}^3, \bnu\cdot \bvarphi=0\}$. Let $H^s(\partial D)$ be the usual Sobolev space of order $s\in\mathbb{R}$ on $\partial D$. We also introduce the function spaces
\begin{align*}
\mathrm{TH}({\rm div}, \p D):&=\Bigr\{ {\bvarphi} \in L_T^2(\partial D):
\nabla_{\partial D}\cdot {\bvarphi} \in L^2(\partial D) \Bigr\},\\
\mathrm{TH}({\rm curl}, \p D):&=\Bigr\{ {\bvarphi} \in L_T^2(\partial D):
\nabla_{\partial D}\cdot ({\bvarphi}\times {\bnu}) \in L^2(\partial D) \Bigr\},
\end{align*}
equipped with the norms
\begin{align*}
&\|{\bvarphi}\|_{\mathrm{TH}({\rm div}, \p D)}=\|{\bvarphi}\|_{L^2(\p D)}+\|\nabla_{\p D}\cdot {\bvarphi}\|_{L^2(\p D)},\medskip \\
&\|{\bvarphi}\|_{\mathrm{TH}({\rm curl}, \p D)}=\|{\bvarphi}\|_{L^2(\p D)}+\|\nabla_{\p D}\cdot({\bvarphi}\times \bnu)\|_{L^2(\p D)}.
\end{align*}

In the following, we introduce some basic notions on layer potentials. For a density function $\phi$, denote by $\Scal_D: H^{-1/2}(\p D)^m\rightarrow H^{1/2}(\p D)^m$, $m=1,3$ the single layer potential operator, which is represented as follows
\beq\label{eq:layerpt1}
\Scal_{D}[\phi](\Bx):=\int_{\p D}\Gamma(\Bx-\By)\phi(\By)d s_\By,
\eeq
where $\GG(\Bx)$ is the fundamental solution to Laplacian $\Delta$ given by
\beq\label{eq:funda01}
\GG(\Bx)=-\frac{1}{4\pi|\Bx|}.
\eeq
It is known that the single layer potential operator $\Scal_D$ satisfies the trace formula
\beq \label{eq:trace}
\frac{\p}{\p\bnu}\Scal_D[\phi] \Big|_{\pm} = \pm \frac{\phi}{2}+
\Kcal_{D}^*[\phi] \quad \mbox{on } \p D, \eeq
where $(\Kcal_{D})^*$ is the adjoint operator of $\Kcal_D$.
We mention that the density function $\phi$ can be either a scalar density function or a vector density function in $\RR^3$. If $\phi\in L_T^2 (\p D)$  then $\Scal_D[\phi]$ is continuous on $\RR^3$ and its curl satisfies the following jump formula:
\begin{equation}\label{jumpM}
\bnu \times \nabla \times \Scal_D[\phi]\big\vert_\pm = \mp \frac{\phi}{2} + \Mcal_D[\phi] \quad \mbox{ on } \p D,
\end{equation}
where $\Mcal_D$ is the boundary operator defined by
\begin{equation}\label{defM}
\begin{aligned} \mathcal{M}_D:
\mathrm{L}_T^2 (\partial D)  & \longrightarrow \mathrm{L}_T^2 (\partial D)  \\
\phi & \longmapsto \mathcal{M}_D[\phi](\Bx)= \bnu_\Bx  \times \nabla_\Bx \times \int_{\p D} \Gamma(\Bx,\By) \bnu_\By \times \phi(\By) ds(\By).
\end{aligned}
\end{equation}

On the other hand, for a vector function $\bvarphi$ on $\p D$, denote by $\mathbf{S}_{D}[\bvarphi](\Bx)$ the single layer potential associated with the Lam\'e system \eqref{eq:lame1n1},
\begin{equation}\label{eq:layerpt2}
  \mathbf{S}_{D}[\bvarphi](\Bx) := \int_{\partial D} \mathbf{G}(\Bx-\By)\bvarphi(\By)ds(\By), \quad \Bx\in \mathbb{R}^3 \backslash \partial{D},
\end{equation}
where $\mathbf{G} = (G_{j,k})^3_{j,k=1}$ is the Kelvin matrix of fundamental solutions to the Lam\'e operator $\mathcal{L}_{\lambda, \mu}$ and has the following representation
\begin{equation}\label{eq:fundamentalsolution_0}
  G_{j,k}(\Bx)=-\frac{\alpha_1}{4\pi} \frac{\delta_{jk}}{|\Bx|} -\frac{\alpha_2}{4\pi} \frac{\Bx_j \Bx_k}{|\Bx|^3},
\end{equation}
with
\begin{equation}
  \alpha_1:=\frac{1}{2}\left( \frac{1}{\mu} + \frac{1}{2\mu+\lambda} \right) \quad \mbox{and} \quad \alpha_2:=\frac{1}{2}\left( \frac{1}{\mu} - \frac{1}{2\mu+\lambda}\right).
\end{equation}
From the definition of traction in \eqref{eq:normald}, the vector valued single layer potential \eqnref{eq:layerpt2} enjoys the following jump relation
\begin{equation}\label{eq:jump_single}
  \frac{\partial}{\partial \bnu} \mathbf{S}_D[\bvarphi]|_{\pm}(\Bx)=\left( \pm \frac{1}{2}\mathbf{I}_3 + \mathbf{K}^*_D \right) [\bvarphi](\Bx), \quad \mbox{a.e.} \; \Bx\in \partial D,
\end{equation}
where $\mathbf{I}_3$ denotes the identity matrix operator in $\RR^3$ and $\mathbf{K}^*_D$ is the Neumann-Poincar\'e(N-P) operator defined by
\begin{equation}\label{eq:operator_k_star}
  \mathbf{K}^*_D[\bvarphi](\Bx):=\mbox{p.v.}\quad \int_{\partial D} \frac{\partial}{\partial\bnu_\Bx} \mathbf{G}(\Bx-\By)\bvarphi(\By)ds(\By).
\end{equation}
In \eqref{eq:operator_k_star}, p.v. stands for the Cauchy principal value. Here and also what in follows, $\frac{\partial}{\partial\bnu_\Bx} \mathbf{G}(\Bx-\By)\bvarphi(\By)$ is defined by
$$\frac{\partial}{\partial\bnu_\Bx} \mathbf{G}(\Bx-\By)\bvarphi(\By):=\frac{\partial}{\partial\bnu_\Bx} (\mathbf{G}(\Bx-\By)\bvarphi(\By)).$$

\section{Spectral analysis of N-P operator in spherical geometry}

In this section, we shall derive the spectral of the N-P operator, $\mathbf{K}^*_D$ associated with Lam\'e system on a ball.
It has been pointed out that the  $\mathbf{K}^*_{D}$ is not a compact operator even if the domain $D$ has a smooth boundary (\cite{AKKY16}), thus we cannot infer directly that the N-P operator has point spectrum on a general smooth domain. However when $D$ is a ball, the properties  of $\mathbf{K}^*_D$ is more elaborate. In this paper, we shall derive the eigenvalues of the N-P operator $\mathbf{K}^*_{D}$ and its corresponding eigenfunctions when the domain $D$ is a ball. Before this, we present several auxiliary lemmas.
\begin{lem}\label{le:01}
Suppose $D$ is a central ball in $\RR^3$ with radius $r_0$. Then the N-P operator $\mathbf{K}^*_D$ can be written in the following form
\beq\label{eq:npop01}
\begin{split}
  \mathbf{K}^*_D[\bvarphi](\Bx)=& -3\frac{\mu}{r_0}\mathbf{S}_D[\bvarphi](\Bx)+(\frac{3}{2}+\frac{\mu}{2(2\mu+\lambda)})\frac{1}{r_0}\Scal_D[\bvarphi](\Bx) \\
  &-\frac{\mu}{2\mu + \lambda}\left(\nabla\times\Scal_{D}[\bnu\times\bvarphi](\Bx)-\nabla\Scal_{D}[\bnu\cdot\bvarphi](\Bx) \right) .
  \end{split}
\eeq
\end{lem}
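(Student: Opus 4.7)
The plan is to compute $\mathbf{K}^*_D[\bvarphi](\Bx)$ for $\Bx\in\partial D$ by directly expanding the kernel $\partial_{\bnu_\Bx}[\mathbf{G}(\Bx-\By)\bvarphi(\By)]$ starting from the Kelvin representation
\[
\mathbf{G}(\Bx-\By) = -\frac{\alpha_1}{4\pi|\Bx-\By|}\mathbf{I}_3 - \frac{\alpha_2}{4\pi|\Bx-\By|^3}(\Bx-\By)(\Bx-\By)^t,
\]
differentiating in $\Bx$, and symmetrizing as prescribed by the traction formula $\partial_{\bnu_\Bx}\mathbf{u}=\lambda(\nabla\cdot\mathbf{u})\bnu_\Bx+\mu(\nabla\mathbf{u}+\nabla\mathbf{u}^t)\bnu_\Bx$. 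The entire argument rests on two sphere identities, and after integration in $\By$ collapses to the form stated in the lemma.

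The two identities, both immediate consequences of $\bnu_\Bx=\Bx/r_0$ and $|\Bx|=|\By|=r_0$, are
\[
\bnu_\Bx\cdot(\Bx-\By)=\frac{|\Bx-\By|^2}{2r_0},\qquad \bnu_\Bx\times(\Bx-\By)=\bnu_\By\times(\Bx-\By)=-\frac{\Bx\times\By}{r_0}.
\]
The first collapses every factor of the form $\bigl((\Bx-\By)\cdot\bnu_\Bx\bigr)/|\Bx-\By|^3$ into the weakly singular $1/(2r_0|\Bx-\By|)$, i.e.\ a multiple of $\Gamma(\Bx-\By)/r_0$. The crucial simplification occurs on the most singular piece of $\partial_{x_j}[G_{ik}(\Bx-\By)\varphi_k]\nu_{\Bx,j}$: after contraction with $\nu_{\Bx,j}$ via the first identity, it reduces to a term proportional to $(\Bx-\By)\bigl((\Bx-\By)\cdot\bvarphi(\By)\bigr)/|\Bx-\By|^3$, and I then solve the Kelvin decomposition for its anisotropic part,
\[
\frac{\alpha_2\bigl((\Bx-\By)\cdot\bvarphi\bigr)(\Bx-\By)}{4\pi|\Bx-\By|^3} = \alpha_1\Gamma(\Bx-\By)\bvarphi - \mathbf{G}(\Bx-\By)\bvarphi,
\]
so that this piece feeds back into a multiple of $\mathbf{G}(\Bx-\By)\bvarphi$. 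Integrating in $\By$ now produces precisely the $-(3\mu/r_0)\mathbf{S}_D[\bvarphi]$ summand of the lemma; gathering all remaining scalar contributions of the form $\Gamma(\Bx-\By)\bvarphi(\By)$ assembles into a $\Scal_D[\bvarphi]$ piece with provisional coefficient $(1/r_0)(3/2+\mu/(2\mu+\lambda))$.

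What remains is an integral of $(\mu/(2\mu+\lambda))\bigl[(\bvarphi(\By)\cdot\bnu_\Bx)\nabla_\Bx\Gamma - (\nabla_\Bx\Gamma\cdot\bvarphi(\By))\bnu_\Bx\bigr]$, which still carries $\bnu_\Bx$ rather than the $\bnu_\By$ appearing in the lemma. Using the vector triple product identity the integrand equals $-\bvarphi(\By)\times(\bnu_\Bx\times\nabla_\Bx\Gamma)$, and since $\nabla_\Bx\Gamma$ is a scalar multiple of $\Bx-\By$, the second sphere identity implies $\bnu_\Bx\times\nabla_\Bx\Gamma=\bnu_\By\times\nabla_\Bx\Gamma$, converting the integrand to $-\bvarphi(\By)\times(\bnu_\By\times\nabla_\Bx\Gamma)$; applying the triple product identity once more, and using the scalar-layer evaluation $\nabla_\Bx\Gamma\cdot\bnu_\By=\Gamma(\Bx-\By)/(2r_0)$ (immediate from $\bnu_\By\cdot(\Bx-\By)=-|\Bx-\By|^2/(2r_0)$), identifies this integral exactly with $-\nabla\times\Scal_D[\bnu\times\bvarphi]+\nabla\Scal_D[\bnu\cdot\bvarphi]-(1/(2r_0))\Scal_D[\bvarphi]$. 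Incorporating the trailing $\Scal_D[\bvarphi]$ summand into the provisional coefficient (via $\mu/(2\mu+\lambda)-\mu/(2(2\mu+\lambda))=\mu/(2(2\mu+\lambda))$) sharpens it to the stated $(1/r_0)(3/2+\mu/(2(2\mu+\lambda)))$, completing the identification. I expect the principal difficulty to lie in the arithmetic bookkeeping: the combinations of $\alpha_1,\alpha_2$ must telescope precisely into the Lam\'e ratios appearing on the right-hand side, so any sign or combinatorial slip would destroy the matching. Principal-value concerns are benign throughout, because the first sphere identity lowers the kernel singularity to at most $|\Bx-\By|^{-1}$.
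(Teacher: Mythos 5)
Your argument is correct and follows essentially the same route as the paper's proof: use the sphere identity $\bnu_\Bx\cdot(\Bx-\By)=|\Bx-\By|^2/(2r_0)$ to collapse the normal derivative of the Kelvin kernel, solve the Kelvin decomposition for the anisotropic dyadic piece to recover a multiple of $\mathbf{G}$ (hence $\mathbf{S}_D$), transfer $\bnu_\Bx$ to $\bnu_\By$ in the remaining skew part, and identify the result with $\nabla\times\Scal_D[\bnu\times\cdot]$ and $\nabla\Scal_D[\bnu\cdot\,\cdot\,]$ via the vector triple product and $\nabla_\Bx\Gamma\cdot\bnu_\By=\Gamma/(2r_0)$. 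The only cosmetic difference is organizational: the paper starts from the pre-packaged decomposition $\partial_{\bnu_\Bx}\mathbf{G}=-b_1\mathbf{K}_1+\mathbf{K}_2$ cited from the 2D companion work and uses the matrix identity $(\bnu_\Bx-\bnu_\By)(\Bx-\By)^t=(\Bx-\By)(\bnu_\Bx-\bnu_\By)^t$ where you expand the Kelvin matrix directly and use the equivalent cross-product identity $\bnu_\Bx\times(\Bx-\By)=\bnu_\By\times(\Bx-\By)$; the final coefficient arithmetic you report (combining the provisional $(1/r_0)(3/2+\mu/(2\mu+\lambda))$ with the extra $-\mu/(2r_0(2\mu+\lambda))\Scal_D[\bvarphi]$ coming from the triple-product rearrangement) checks out against the paper's $L_1$, $L_2$ split.
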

\begin{proof}
Let $\Bx$ and $\By$ be vectors on $\p D$. By \eqnref{eq:fundamentalsolution_0} and straightforward computations one can show that (see also \cite{AJKKY15})
 \begin{equation}\label{eq:le01}
    \partial_{\bnu_\Bx}\mathbf{G}(\Bx-\By)=-b_1 \mathbf{K}_1(\Bx,\By) + \mathbf{K}_2(\Bx,\By),
  \end{equation}
  where
  \begin{equation}\label{eq:le02}
    \begin{split}
      \mathbf{K}_1(\Bx,\By)= & \frac{\bnu_\Bx(\Bx-\By)^t - (\Bx-\By) \bnu_\Bx^t}{4\pi|\Bx-\By|^3}, \\
      \mathbf{K}_2(\Bx,\By)= & b_1 \frac{(\Bx-\By)\cdot\bnu_\Bx}{4\pi|\Bx-\By|^3}\mathbf{I}_3 + b_2 \frac{(\Bx-\By)\cdot\bnu_\Bx}{4\pi|\Bx-\By|^5}(\Bx-\By)(\Bx-\By)^t,
    \end{split}
  \end{equation}
  with
  \begin{equation}\label{eq:le03}
    b_1=\frac{\mu}{2\mu + \lambda} \quad \mbox{and}\quad b_2=\frac{3(\mu+\lambda)}{2\mu+\lambda}.
  \end{equation}
Then by \eqnref{eq:operator_k_star}, we have
\beq\label{eq:le001}
\begin{split}
\mathbf{K}^*_D[\bvarphi](\Bx)=&-b_1\int_{\p D}\mathbf{K}_1(\Bx,\By)\bvarphi(\By)ds(\By)+\int_{\p D}\mathbf{K}_2(\Bx,\By)\bvarphi(\By)ds(\By)\\
:=& L_1+L_2.
\end{split}
\eeq
Since $D$ is a central ball, for $\Bx,\By\in \p D$, one has that
$$
(\bnu_\Bx-\bnu_\By)(\Bx-\By)^t=(\Bx-\By)(\bnu_\Bx-\bnu_\By)^t
$$
and thus
\begin{equation}\label{eq:le04}
    \begin{split}
      \mathbf{K}_1(\Bx,\By)= & \frac{\bnu_\Bx(\Bx-\By)^t - (\Bx-\By) \bnu_\Bx^t}{4\pi|\Bx-\By|^3}, \\
                  = & \frac{(\bnu_\Bx-\bnu_\By+\bnu_\By)(\Bx-\By)^t - (\Bx-\By) (\bnu_\Bx -\bnu_\By+\bnu_\By)^t}{4\pi|\Bx-\By|^3}, \\
                  = & \frac{\bnu_\By(\Bx-\By)^t - (\Bx-\By) \bnu_\By^t}{4\pi|\Bx-\By|^3}.
    \end{split}
  \end{equation}
Next, it is also easy to verify that
\begin{equation}\label{eq:le05}
  \frac{(\Bx-\By)\cdot \bnu_\By}{|\Bx-\By|^3}=-\frac{1}{2r_0} \frac{1}{|\Bx-\By|}.
\end{equation}
By using vector calculus identity, \eqnref{eq:le001} and \eqnref{eq:le05}, there holds
\beq\label{eq:le002}
\begin{split}
  L_1 & =-b_1\int_{\p D}\nabla_\Bx\Gamma(\Bx-\By)\times\bnu_\By\times\bvarphi(\By)+\frac{1}{2r_0}\Gamma(\Bx-\By)\bvarphi-\nabla_\Bx\Gamma(\Bx-\By)(\bnu\cdot\bvarphi)ds(\By) \\
      & =-b_1\left(\nabla\times\Scal_{D}[\bnu\times\bvarphi](\Bx)+\frac{1}{2r_0}\Scal_{D}[\bvarphi](\Bx)-\nabla\Scal_{D}[\bnu\cdot\bvarphi](\Bx) \right)
\end{split}
\eeq
Then by direct calculation, one further has that
\begin{equation}\label{eq:K_2}
  \begin{split}
     \mathbf{K}_2(\Bx,\By) &= -\frac{b_1}{2r_0}  \Gamma(\Bx-\By)\mathbf{I}_3 + \frac{b_2}{2r_0} \frac{(\Bx-\By) (\Bx-\By)^t}{4\pi |\Bx-\By|^3}\\
      & =    -\frac{b_2}{2r_0\alpha_2} \mathbf{G}(\Bx-\By) + (\frac{b_2 \alpha_1}{2r_0\alpha_2} -\frac{b_1}{2r_0})\Gamma(\Bx-\By) \mathbf{I}_3 .
  \end{split}
\end{equation}
Hence, there holds
\beq\label{eq:le003}
\begin{split}
L_2&= -\frac{b_2}{2r_0\alpha_2}\int_{\p D} \mathbf{G}(\Bx-\By)\bvarphi(\By)ds(\By)+(\frac{b_2 \alpha_1}{2r_0\alpha_2} -\frac{b_1}{2r_0})\int_{\p D}\Gamma(\Bx-\By)\bvarphi(\By)ds(\By)\\
&=-\frac{b_2}{2r_0\alpha_2}\mathbf{S}_D[\bvarphi](\Bx)+(\frac{b_2 \alpha_1}{2r_0\alpha_2} -\frac{b_1}{2r_0})\Scal_D[\bvarphi](\Bx).
\end{split}
\eeq
Finally, by combining \eqnref{eq:le002} and \eqnref{eq:le003}, we have
\beq
\begin{split}
  \mathbf{K}^*_D[\bvarphi](\Bx)=& -b_1\left(\nabla\times\Scal_{D}[\bnu\times\bvarphi](\Bx)-\nabla\Scal_{D}[\bnu\cdot\bvarphi](\Bx) \right) \\
    & -\frac{b_2}{2r_0\alpha_2}\mathbf{S}_D[\bvarphi](\Bx)+(\frac{b_2 \alpha_1}{2r_0\alpha_2} -\frac{b_1}{r_0})\Scal_D[\bvarphi](\Bx).
\end{split}
\eeq
By calculating the coefficients in the above equation we arrive at \eqnref{eq:npop01},
which completes the proof.
\end{proof}

\begin{rem}\label{re:01}
We mention that the last two terms in \eqnref{eq:npop01} are defined by Cauchy principal values, and it is clearly that the related two operators in the last two terms are not compact operators, which shows that $\mathbf{K}_D^*$ is not a compact operator in $L^2(\p D)^3$.
\end{rem}
In the following, we shall define orthogonal vectorial polynomials which will be quite important in the analysis of the spectral of the N-P operator $\mathbf{K}_D^*$. Let $r=|\Bx|$ and $Y^m_n(\hat{\Bx})$, $-n\leq m \leq n$ be spherical harmonics on the unit sphere $S$. Define three vectorial polynomials
\begin{equation}\label{eq:elaspol02}
  \mathcal{T}^m_n(\Bx)=\nabla (r^{n} Y^m_{n}(\hat{\Bx}))\times \Bx, \quad n\geq 1, \quad  -n\leq m\leq n,
\end{equation}
and
\begin{equation}\label{eq:elaspol01}
  \mathcal{M}^m_n(\Bx)=\nabla(r^{n} Y^m_{n}(\hat{\Bx})), \quad n\geq 1,\quad  -n\leq m\leq n,
\end{equation}
and
\begin{equation}\label{eq:elaspol03}
   \mathcal{N}^m_n(\Bx)=a^m_n r^{n-1} Y^m_{n-1}(\hat{\Bx})\Bx + (1-\frac{a_n^m}{2n-1}-r^2) \nabla(r^{n-1} Y^m_{n-1}(\hat{\Bx})),
\end{equation}
where
\begin{equation}\label{eq:anmdef01}
  a^m_n = \frac{2(n-1)\lambda + 2(3n-2)\mu}{(n+2)\lambda + (n+4)\mu}, \quad n\geq 1, \quad  -(n-1)\leq m\leq n-1.
\end{equation}
By directly using the trace theorem,
the traces of $\mathcal{T}^m_n$, $\mathcal{M}^m_n$ and $\mathcal{N}^m_n$ on the unit sphere $S$, denoted by $\mathbf{T}^m_n$, $\mathbf{M}^m_n$ and $\mathbf{N}^m_n$, have the following form
\beq\label{eq:elaspol04}
  \begin{split}
    \mathbf{T}^m_n(\Bx)= & \nabla_{S}  Y^m_{n}(\hat{\Bx}) \times \bnu_\Bx,\\
        \mathbf{M}^m_n(\Bx)= & \nabla_{S} Y^m_{n}(\hat{\Bx}) +n  Y^m_{n}(\hat{\Bx}) \bnu_\Bx, \\
    \mathbf{N}^m_n(\Bx)= &\frac{a_n^m}{2n-1}(-\nabla_{S}  Y^m_{n-1}(\hat{\Bx}) + nY^m_{n-1}(\hat{\Bx})\bnu_\Bx).
  \end{split}
\eeq
We have the following fundamental result
\begin{lem}\label{le:02}
The polynomials $\mathcal{T}^m_n$, $\mathcal{M}^m_n$ and $\mathcal{N}^m_n$ are solutions to the elastic equation $\mathcal{L}_{\lambda, \mu}\bu(\Bx)=0$. Moreover, ($\mathbf{T}^m_n$, $\mathbf{M}^m_n$, $\mathbf{N}^m_n$) defined in \eqnref{eq:elaspol04} forms an orthogonal basis on $L^2(S)^3$.
\end{lem}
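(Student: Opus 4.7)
The plan is to treat the two assertions separately. For the PDE claim I verify $\mathcal{L}_{\lambda,\mu}\bu=0$ pointwise using the expansion $\mathcal{L}_{\lambda,\mu}\bu=\mu\Delta\bu+(\lambda+\mu)\nabla(\nabla\cdot\bu)$; for the basis claim I first check pairwise $L^2(S)^3$-orthogonality by direct calculation with scalar spherical harmonics, and then invoke the Helmholtz--Hodge decomposition of $L^2(S)^3$ to conclude spanning.

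For the PDE part, let $h=r^{n}Y^m_n(\hat\Bx)$, a harmonic polynomial homogeneous of degree $n$. Then $\mathcal{M}^m_n=\nabla h$ is immediately harmonic and divergence-free, so $\mathcal{L}_{\lambda,\mu}\mathcal{M}^m_n=0$. For $\mathcal{T}^m_n=\nabla h\times\Bx$ the vector identities $\nabla\cdot(\nabla h\times\Bx)=\Bx\cdot(\nabla\times\nabla h)-\nabla h\cdot(\nabla\times\Bx)=0$ and $\Delta(\nabla h\times\Bx)=(\Delta\nabla h)\times\Bx+2(\nabla\times\nabla h)=0$ yield the same conclusion. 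The main calculation is that of $\mathcal{N}^m_n$: setting $\tilde h=r^{n-1}Y^m_{n-1}(\hat\Bx)$, Euler's relation $\Bx\cdot\nabla\tilde h=(n-1)\tilde h$ together with the identities
\[
\nabla\cdot(\tilde h\,\Bx)=(n+2)\tilde h,\quad \Delta(\tilde h\,\Bx)=2\nabla\tilde h,\quad \Delta\bigl((c-r^2)\nabla\tilde h\bigr)=-2(2n-1)\nabla\tilde h
\]
(the last valid for any constant $c$) reduce both $\Delta\mathcal{N}^m_n$ and $\nabla(\nabla\cdot\mathcal{N}^m_n)$ to scalar multiples of $\nabla\tilde h$. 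Substituting into $\mu\Delta+(\lambda+\mu)\nabla\nabla\cdot$ and requiring the resulting coefficient to vanish produces exactly the formula \eqnref{eq:anmdef01} for $a^m_n$, confirming $\mathcal{L}_{\lambda,\mu}\mathcal{N}^m_n=0$.

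For the orthogonality of the traces on $S$, observe from \eqnref{eq:elaspol04} that $\mathbf{T}^m_n$ is purely tangential (a surface curl) while $\mathbf{M}^m_n$ and $\mathbf{N}^m_n$ each decompose into a surface-gradient tangential part and a radial part. The identity $\nabla_S\cdot(\nabla_S g\times\bnu)=0$ and integration by parts on $S$ give $\int_S\nabla_S f\cdot(\nabla_S g\times\bnu)\,dS=0$ for any smooth scalars $f,g$, so the $\mathbf{T}$ family is orthogonal to every $\mathbf{M}^k_l$ and $\mathbf{N}^k_l$. Within each family, orthogonality follows from the $L^2(S)$-orthogonality of the spherical harmonics together with $\int_S\nabla_S Y^m_n\cdot\nabla_S Y^k_l\,dS=n(n+1)\delta_{nl}\delta_{mk}$. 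For the cross term $\langle\mathbf{M}^m_n,\mathbf{N}^k_l\rangle_{L^2(S)^3}$ the only potentially nonzero case is $l=n+1,\,k=m$, and there the tangential contribution $-n(n+1)$ cancels exactly against the radial contribution $nl=n(n+1)$, so the inner product still vanishes.

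Finally, for completeness I would invoke the Helmholtz--Hodge decomposition of $L^2(S)^3$: every vector field splits orthogonally into a radial component, a surface-gradient tangential component, and a surface-curl tangential component. Expanding each scalar potential in $\{Y^m_n\}$, one sees that for each $n\ge 1$ and $|m|\le n$ the pair $(\mathbf{M}^m_n,\mathbf{N}^m_{n+1})$ spans the two-dimensional subspace generated by $\nabla_S Y^m_n$ and $Y^m_n\bnu$ (the change-of-basis determinant being a nonzero multiple of $a^m_{n+1}$), while $\mathbf{N}^0_1$ supplies the constant radial mode $Y^0_0\bnu$ and the $\mathbf{T}^m_n$ exhaust the surface-curl modes. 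The main obstacle I expect is the coefficient bookkeeping in the verification $\mathcal{L}_{\lambda,\mu}\mathcal{N}^m_n=0$ together with the subtle cancellation in the $\mathbf{M}$--$\mathbf{N}$ orthogonality case $l=n+1$, both of which depend delicately on the specific form of $a^m_n$.
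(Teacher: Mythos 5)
Your proof is correct and follows the same direct-verification route as the paper: check that $\mathcal{L}_{\lambda,\mu}$ annihilates each of $\mathcal{T}^m_n$, $\mathcal{M}^m_n$, $\mathcal{N}^m_n$ via vector-calculus identities (the paper records $\nabla\cdot\mathcal{N}^m_n$ and $\nabla\times\nabla\times\mathcal{N}^m_n$ and invokes the defining formula for $a^m_n$, exactly as your coefficient bookkeeping does), and then establish orthogonality and completeness through the vector spherical harmonics decomposition of $L^2(S)^3$ (which the paper simply cites from N\'ed\'elec, while you spell it out). One harmless slip: the identity is $\Delta(\nabla h\times\Bx)=(\Delta\nabla h)\times\Bx-2\,\nabla\times\nabla h$, not $+2$, but the term vanishes anyway since $\nabla\times\nabla h=0$.
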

\begin{proof}
It is easy to find that $\mathcal{T}^m_n$ and $\mathcal{M}^m_n$ are spherical harmonic functions and divergence free (see Theorem 2.4.7 in \cite{Jcn}). Thus from \eqnref{eq:lame2} one can easily obtain that
$$
\mathcal{L}_{\lambda, \mu}\mathcal{M}^m_n=\mathcal{L}_{\lambda, \mu}\mathcal{T}^m_n=0.
$$
For $\mathcal{N}^m_n$, note that by \eqnref{eq:elaspol03} there holds
$$\nabla\cdot \mathcal{N}^m_n=(a^m_n(n+2)-2(n-1))r^{n-1}Y_{n-1}^m(\hat{\Bx}),$$
and
$$
\nabla\times\nabla\times\mathcal{N}^m_n=n(a^m_n+2)\nabla(r^{n-1}Y_{n-1}^m).
$$
Hence by using \eqnref{eq:lame2} again and \eqnref{eq:anmdef01}, one has
\beq\label{eq:Nmn01}
\mathcal{L}_{\lambda, \mu}\mathcal{N}^m_n=\mu \Delta \mathcal{N}^m_n +(\lambda + \mu)\nabla \nabla \cdot \mathcal{N}^m_n=0.
\eeq
The orthogonality of ($\mathbf{T}^m_n$, $\mathbf{M}^m_n$, $\mathbf{N}^m_n$) can be obtained by straightforward computations (cf. \cite{Jcn}).

The proof is complete.
\end{proof}

\begin{lem}\label{le:03}
Suppose that the domain $D$ is a central ball with radius $r_0$. Let ($\mathbf{T}^m_n$, $\mathbf{M}^m_n$, $\mathbf{N}^m_n$) be defined in \eqnref{eq:elaspol04}, then there holds the following on $\p D$
\beq\label{eq:le03001}
\begin{split}
\Scal_D[\mathbf{T}^m_n]&=-\frac{r_0}{2n+1}\mathbf{T}^m_n,\\
\Scal_D[\mathbf{M}^m_n]&= -\frac{r_0}{(2n-1)}\mathbf{M}^m_n,\\
\Scal_D[\mathbf{N}^m_{n+1}]&=-\frac{r_0}{2n+3}\mathbf{N}^m_{n+1},
\end{split}
\eeq
where $n\geq 0$ and $-n\leq m\leq n$.
\end{lem}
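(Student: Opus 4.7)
The proof plan is to reduce the action of the scalar single-layer potential $\Scal_D$ on each of the three vectorial families to the action of the scalar Laplace single-layer potential on scalar spherical harmonics, via the classical identity
\[
\Scal_D[Y_n^m(\hat{\cdot})](\Bx) = -\frac{r_0}{2n+1}\,Y_n^m(\hat{\Bx}) \qquad\text{on }\p D,
\]
which follows by substituting the spherical-harmonic expansion of $\GG(\Bx-\By)$ into \eqref{eq:layerpt1} and using orthogonality of the $Y_n^m$. Since $\Scal_D$ acts componentwise on a vector density, it suffices to show that the Cartesian components of $\mathbf{T}_n^m$, $\mathbf{M}_n^m$, and $\mathbf{N}_{n+1}^m$, regarded as scalar functions of $\hat{\Bx}$ on $\p D$, lie in the single spherical-harmonic frequencies $n$, $n-1$, and $n+1$, respectively; the three eigenvalues in \eqref{eq:le03001} then follow immediately.

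For $\mathbf{T}_n^m$ I would start from $\mathcal{T}_n^m = \nabla(r^nY_n^m)\times\Bx$, written in indices as $(\mathcal{T}_n^m)_i=\epsilon_{ijk}\partial_j(r^nY_n^m)\,x_k$. A one-line Laplacian computation using $\Delta(r^nY_n^m)=0$ and the antisymmetry of $\epsilon_{ijk}$ paired with the symmetry of $\partial_j\partial_k$ shows that each component is harmonic; being also homogeneous of degree $n$, it is a linear combination of the solid harmonics $r^nY_n^\ell$. A direct evaluation yields $\mathcal{T}_n^m|_{\p D}=r_0^n\mathbf{T}_n^m$, so the Cartesian components of $\mathbf{T}_n^m$ on $\p D$ are combinations of the $Y_n^\ell(\hat{\Bx})$, giving the eigenvalue $-r_0/(2n+1)$. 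For $\mathbf{M}_n^m$ the argument is even simpler: the components of $\mathcal{M}_n^m=\nabla(r^nY_n^m)$ are partial derivatives of a harmonic polynomial, hence harmonic polynomials of degree $n-1$, yielding the eigenvalue $-r_0/(2n-1)$.

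The main obstacle is the $\mathbf{N}_{n+1}^m$ case, because the components of $\mathcal{N}_{n+1}^m$ contain the non-harmonic building block $Ux_i$ with $U=r^nY_n^m$, together with an explicit $r^2\nabla U$ correction. The plan here is an Almansi-type decomposition: computing $\Delta(Ux_i)=2\partial_iU$ together with the identity $\Delta(r^2 g)=(4k+6)g$ for $g$ harmonic of degree $k$ (applied with $k=n-1$, so $\Delta(r^2\partial_iU)=2(2n+1)\partial_iU$), one obtains
\[
Ux_i = h_i + \frac{r^2}{2n+1}\partial_iU,
\]
with $h_i$ a harmonic polynomial of degree $n+1$. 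Substituting into $\mathcal{N}_{n+1}^m=a_{n+1}^m\,U\Bx+(1-\tfrac{a_{n+1}^m}{2n+1}-r^2)\nabla U$ and collecting the $\partial_iU$ terms gives
\[
(\mathcal{N}_{n+1}^m)_i = a_{n+1}^m\,h_i + (1-r^2)\Bigl(1-\frac{a_{n+1}^m}{2n+1}\Bigr)\partial_iU.
\]
The coefficient $1$ in front of $-r^2$ in the definition \eqref{eq:elaspol03} is exactly what makes the second term vanish on $\{r=1\}$, leaving $(\mathcal{N}_{n+1}^m)_i|_{r=1}=a_{n+1}^m h_i|_{r=1}$, a pure combination of $Y_{n+1}^\ell$. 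A short direct check matches this with the formula for $\mathbf{N}_{n+1}^m$ in \eqref{eq:elaspol04}. Since $\mathbf{N}_{n+1}^m$ depends only on $\hat{\Bx}$, its components, regarded as a density on $\p D$ for any radius $r_0$, remain combinations of $Y_{n+1}^\ell(\hat{\Bx})$, and a final application of the scalar formula delivers the eigenvalue $-r_0/(2n+3)$.
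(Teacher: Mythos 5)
Your proof is correct, and it takes a genuinely different route from the paper's. The paper details only the $\mathbf{M}_n^m$ case: it applies the scalar trace formula \eqref{eq:trace} for $\Scal_D$, invokes the sphere-specific identity $\Kcal_D^*=-\tfrac{1}{2r_0}\Scal_D$ (a consequence of \eqref{eq:le05}), proposes an ansatz for the interior harmonic extension of $\Scal_D[\mathbf{M}_n^m]$ as a linear combination of two vector-valued solid harmonics, and then solves a two-by-two system for the coefficients using orthogonality; the other two cases are asserted to be similar. You instead reduce everything directly to the classical scalar eigenvalue $\Scal_D[Y_k^\ell]=-\tfrac{r_0}{2k+1}Y_k^\ell$ on a sphere, noting that $\Scal_D$ acts componentwise, and then verify that the Cartesian components of $\mathbf{T}_n^m$, $\mathbf{M}_n^m$, $\mathbf{N}_{n+1}^m$ are pure degree-$n$, degree-$(n-1)$, degree-$(n+1)$ spherical harmonics, respectively. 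For $\mathbf{T}$ and $\mathbf{M}$ this follows from the harmonicity and homogeneity of $\nabla(r^nY_n^m)\times\Bx$ and $\nabla(r^nY_n^m)$; for $\mathbf{N}$ you need the Almansi split $Ux_i=h_i+\tfrac{r^2}{2n+1}\partial_iU$ (with $U=r^nY_n^m$, using $\Delta(Ux_i)=2\partial_iU$ and $\Delta(r^2\partial_iU)=2(2n+1)\partial_iU$), after which the $\partial_iU$ contribution to $\mathcal{N}_{n+1}^m$ collects a factor $(1-r^2)$ and vanishes on the unit sphere. Both arguments are valid and yield the same eigenvalues. Your approach is arguably more economical: it dispenses with the jump formula, the sphere-only relation between $\Kcal_D^*$ and $\Scal_D$, and the ansatz (which the paper does not justify), and it handles all three families uniformly. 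The paper's method, on the other hand, mirrors the template reused immediately afterward in the proof of Theorem~\ref{thm:eigenvalue}, so keeping it here serves as a warm-up for that longer computation.
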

\begin{proof}
We shall only prove the second identity in \eqnref{eq:le03001} and the other two can be proved similarly. Without loss of generality we suppose that $D$ is a unit sphere. By using the jump formula \eqnref{eq:jump_single} we have
\beq\label{eq:le03002}
\frac{\p\Scal_D[\mathbf{M}^m_n]}{\p\bnu}\Big|_-=-\frac{1}{2}\mathbf{M}^m_n+\Kcal_D^*[\mathbf{M}^m_n],\quad \mbox{on} \quad\p D.
\eeq
Since $D$ is a ball, there holds the following identity (cf. \cite{HK07:book})
$$
\Kcal_D^*[\mathbf{M}^m_n]=-\frac{1}{2}\Scal_D[\mathbf{M}^m_n],
$$
and thus
\beq\label{eq:le03003}
\frac{\p\Scal_D[\mathbf{M}^m_n]}{\p\bnu}\Big|_-=-\frac{1}{2}\mathbf{M}^m_n-\frac{1}{2}\Scal_D[\mathbf{M}^m_n]\quad \mbox{on} \quad\p D.
\eeq
Suppose $\Scal_D[\mathbf{M}^m_n]$ has the following form in $D$
\beq\label{eq:le03004}
\Scal_D[\mathbf{M}^m_n]=c_1 \nabla(r^nY_n^m)+c_2\left((2n+1)r^nY_n^m\Bx-r^2\nabla(r^nY_n^m)\right) ,
\eeq
where $c_1$ and $c_2$ are constants which depends on $n$. Then by substituting \eqnref{eq:le03004} into \eqnref{eq:le03003} and using the trace theorem there holds
\beq
\begin{split}
    & c_1(n-1)\mathbf{M}^m_n + c_2(n+1)(-\nabla_{S}  Y^m_{n}(\hat{\Bx}) + (n+1)Y^m_{n}(\hat{\Bx})\bnu_\Bx) \\
   =& -\frac{1}{2}\mathbf{M}^m_n - \frac{1}{2}(c_1\mathbf{M}^m_n + c_2(-\nabla_{S}  Y^m_{n}(\hat{\Bx}) + (n+1)Y^m_{n}(\hat{\Bx})\bnu_\Bx) ),
\end{split}
\eeq
and by using the orthogonality property one has
\beq\label{eq:le03005}
\begin{split}
&c_1(n-1)=-1/2-c_1/2, \\
&c_2(n+1)=-c_2/2.
\end{split}
\eeq
By solving \eqnref{eq:le03005} we get that
\beq\label{eq:le03006}
c_1=-\frac{1}{2n-1},\quad c_2=0.
\eeq
Finally by substituting \eqnref{eq:le03006} into \eqnref{eq:le03004} and the trace theorem we obtain the first equation in \eqnref{eq:le03001}.

The proof is complete.
\end{proof}

\begin{thm}\label{thm:eigenvalue}
  Suppose that the domain $D$ is a central ball of radius $r_0$, then the the eigenvalues of the operator $\mathbf{K}^*_D$ are given by
  \begin{equation}\label{eq:eigenvalue}
    \begin{split}
      \xi^n_1= & \frac{3}{4n+2}, \\
      \xi^n_2= &\frac{3\lambda-2\mu(2n^2-2n-3)}{2(\lambda+2\mu)(4n^2-1)}, \\
      \xi^n_3= & \frac{-3 \lambda + 2\mu(2n^2 + 2n - 3)}{2(\lambda + 2\mu)(4n^2 - 1)},
    \end{split}
  \end{equation}
  where $n\geq 1$ are nature numbers, and the corresponding eigenfunctions are respectively $\mathbf{T}_n^m$, $\mathbf{M}_n^m$ and $\mathbf{N}_n^m$.
\end{thm}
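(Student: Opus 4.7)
The plan is to apply the decomposition of $\mathbf{K}^*_D$ from Lemma \ref{le:01} term by term to each basis element $\mathbf{T}^m_n$, $\mathbf{M}^m_n$, $\mathbf{N}^m_n$, exploiting the fact that Lemma \ref{le:03} already diagonalizes the scalar Laplace single layer $\mathcal{S}_D$ on this basis. Thus only the vector Lam\'e single layer $\mathbf{S}_D[\bvarphi]$ and the two auxiliary terms $\nabla \times \mathcal{S}_D[\bnu \times \bvarphi]$ and $\nabla \mathcal{S}_D[\bnu \cdot \bvarphi]$ require fresh computation, and the three eigenvalues in \eqref{eq:eigenvalue} should emerge from collecting the scalar coefficients once everything is expressed back in the basis.

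First I would read off from \eqref{eq:elaspol04} the normal and tangential components of each basis element: for example $\bnu\cdot\mathbf{T}^m_n = 0$ and $\bnu\times\mathbf{T}^m_n = \nabla_S Y^m_n$, while $\mathbf{M}^m_n$ has normal component $nY^m_n$ and tangential part $\nabla_S Y^m_n$, with the analogous decomposition for $\mathbf{N}^m_n$ involving $Y^m_{n-1}$. Plugging these into $\mathcal{S}_D$ and then applying $\nabla$ or $\nabla\times$ reduces the auxiliary terms to the well-understood action of the scalar Laplace single layer on spherical harmonics $Y^m_k$; re-expressing the resulting gradients back in terms of $\mathbf{T}^m_n$, $\mathbf{M}^m_n$, $\mathbf{N}^m_n$ through the identities \eqref{eq:elaspol02}--\eqref{eq:elaspol04} finishes this block.

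The main obstacle will be to evaluate the vector Lam\'e single layer $\mathbf{S}_D[\bvarphi]$, which is not covered directly by Lemma \ref{le:03}. My approach would be to use Lemma \ref{le:02}: since $\mathcal{T}^m_n$, $\mathcal{M}^m_n$, $\mathcal{N}^m_n$ are polynomial Lam\'e solutions with traces $\mathbf{T}^m_n$, $\mathbf{M}^m_n$, $\mathbf{N}^m_n$, the interior piece $\mathbf{S}_D[\bvarphi]|_D$ is a Lam\'e-harmonic field and hence by uniqueness of the Dirichlet problem is an explicit combination of such polynomials carrying the same spherical-harmonic index, while the exterior piece is pinned down by an analogous ansatz of radially decaying Lam\'e solutions together with the decay condition at infinity. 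Imposing continuity across $\partial D$ together with the jump relation \eqref{eq:jump_single} determines the remaining scalar coefficients, and restricting back to $\partial D$ yields $\mathbf{S}_D[\bvarphi]$ as a scalar multiple of $\bvarphi$ for each of the three types.

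Substituting the four contributions back into the formula in Lemma \ref{le:01} and simplifying the resulting rational expressions in $n$, $\lambda$, $\mu$ should then produce $\xi^n_1$, $\xi^n_2$, $\xi^n_3$ as in \eqref{eq:eigenvalue}. I expect the real technical delicacy to lie in the $\mathcal{M}$--$\mathcal{N}$ block: since $\mathcal{M}^m_n$ and $\mathcal{N}^m_{n+1}$ both carry the harmonic index $Y^m_n$, the naive ansatz for $\mathbf{S}_D[\mathbf{M}^m_n]$ and $\mathbf{S}_D[\mathbf{N}^m_{n+1}]$ a priori couples the two, and only the specific choice of $a^m_n$ in \eqref{eq:anmdef01} --- already built into the definition of $\mathcal{N}^m_n$ precisely so that $\mathcal{L}_{\lambda,\mu}\mathcal{N}^m_n = 0$ --- makes the off-diagonal contributions cancel and preserves the eigenbasis. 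Verifying these cancellations, together with the considerable bookkeeping of the rational expressions in $\lambda$ and $\mu$, is expected to be the most involved step.
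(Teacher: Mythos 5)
Your plan matches the paper's proof closely: both apply the decomposition \eqref{eq:npop01} from Lemma \ref{le:01} term by term to $\mathbf{T}^m_n$, $\mathbf{M}^m_n$, $\mathbf{N}^m_n$, use Lemma \ref{le:03} for the scalar single layer $\mathcal{S}_D$, reduce the auxiliary curl and gradient terms via the jump formulas \eqref{jumpM} and \eqref{eq:trace}, and determine $\mathbf{S}_D[\bvarphi]$ by a Lam\'e-harmonic polynomial ansatz, from which the three eigenvalues drop out on collecting coefficients.

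Two small remarks. First, on pinning down $\mathbf{S}_D[\bvarphi]$: you propose to solve the full interior/exterior transmission problem (continuity plus two-sided traction jump), whereas the paper only ever postulates the interior form. It rewrites \eqref{eq:npop01} as $\mathbf{K}^*_D[\bvarphi]=-3\mu\,\mathbf{S}_D[\bvarphi]+\gamma_n\bvarphi$ on $\p D$, with $\gamma_n$ already computed from Lemma \ref{le:03} and the jump formulas, and substitutes this into the one-sided jump $\p_\bnu\mathbf{S}_D[\bvarphi]|_-=-\tfrac12\bvarphi+\mathbf{K}^*_D[\bvarphi]$; the interior ansatz turns the left side into an explicit multiple of $\bvarphi$, and solving the single resulting scalar equation gives the coefficient and then the eigenvalue. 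The exterior piece is never needed, so your route is sound but does strictly more work. Second, your caution about the $\mathcal{M}$--$\mathcal{N}$ coupling is well placed and is in fact a point the paper glosses over: it writes the one-parameter ans\"atze $\mathbf{S}_D[\mathbf{M}^m_n]=c\nabla(r^nY^m_n)$ and $\mathbf{S}_D[\mathbf{N}^m_{n+1}]=c\,\mathcal{N}^m_{n+1}$ without justifying the absence of cross-terms, although both carry the harmonic index $Y^m_n$. If one instead tries $c_1\mathcal{M}^m_n+c_2\mathcal{N}^m_{n+1}$ for $\mathbf{S}_D[\mathbf{M}^m_n]$, the $\mathbf{N}^m_{n+1}$-component of the resulting vector equation reads $c_2\gamma_{n+1}=-3\mu c_2$, where $\gamma_{n+1}=\mu\bigl(2(2n+1)/a^m_{n+1}-3\bigr)$ is the interior traction eigenvalue of $\mathcal{N}^m_{n+1}$; since $\gamma_{n+1}+3\mu=2\mu(2n+1)/a^m_{n+1}\neq 0$ this forces $c_2=0$, and symmetrically the $\mathcal{M}^m_n$-coefficient in $\mathbf{S}_D[\mathbf{N}^m_{n+1}]$ satisfies $d_1(2n+1)=0$ hence vanishes. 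So the blocks do decouple, the one-parameter ansatz is justified a posteriori, and making this explicit would tighten both your writeup and the paper's.
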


\begin{proof}
Without loss of generality we set $r_0=1$. First, letting $\bvarphi=\mathbf{T}_n^m=\nabla_S Y_n^m\times\bnu$ and using the results in Lemma \ref{le:03}, one can show that
\beq\label{eq:eigenpf01}
\Scal_D[\nabla_S Y_n^m\times\bnu]=-\frac{1}{2n+1}\nabla (r^nY_n^m)\times\Bx, \quad \mbox{in} \quad D.
\eeq
Furthermore, there holds
\beq\label{eq:eigenpf02}
\nabla\times\Scal_D[\bnu\times\nabla_S Y_n^m\times\bnu]=\nabla\times\Scal_D[\nabla_S Y_n^m]=\frac{n}{2n+1}\nabla (r^nY_n^m)\times\Bx, \quad \mbox{in} \quad D.
\eeq
and by using the jump formula \eqnref{jumpM} there also holds
\beq\label{eq:eigenpf022}
\nabla\times\Scal_D[\bnu\times\nabla_S Y_n^m\times\bnu]=\frac{n}{2n+1}\nabla_SY_n^m\times\bnu-\frac{1}{2}\nabla_S Y_n^m\times\bnu, \quad \mbox{on} \quad \p D.
\eeq
Hence, by using \eqnref{eq:npop01}, \eqnref{eq:eigenpf01} and \eqnref{eq:eigenpf022} one obtains
\beq\label{eq:eigenpf03}
\mathbf{K}_D^*[\nabla_S Y_n^m\times\bnu]=-3\mu\mathbf{S}_D[\nabla_S Y_n^m\times\bnu]-\frac{3}{2(2n+1)}\nabla_S Y_n^m\times\bnu.
\eeq
By combining the jump formula \eqnref{eq:jump_single} one can suppose that
\beq\label{eq:eigenpf04}
\mathbf{S}_D[\nabla_S Y_n^m\times\bnu]=c\nabla(r^n Y_n^m)\times\Bx \quad \mbox{in}\quad D,
\eeq
and by using \eqnref{eq:normald} one can calculate
\beq\label{eq:eigenpf05}
\begin{split}
\frac{\partial}{\partial \bnu} \mathbf{S}_D[\nabla_S Y_n^m \times\bnu]|_{-}&=c\mu(\nabla(\nabla(r^n Y_n^m)\times\Bx)+(\nabla(\nabla(r^n Y_n^m)\times\Bx))^t)\bnu\\
&=c\mu(n-1)\nabla_S Y_n^m \times\bnu.
\end{split}
\eeq
By substituting \eqnref{eq:eigenpf05} into \eqnref{eq:normald} and using \eqnref{eq:eigenpf03} and \eqnref{eq:eigenpf04}, one can show
\beq\label{eq:eigenpf06}
c\mu(n-1)\nabla_S Y_n^m \times\bnu=-\frac{1}{2}\nabla_S Y_n^m \times\bnu-\left(3c\mu+\frac{3}{2(2n+1)}\right)\nabla_S Y_n^m\times\bnu.
\eeq
Therefore, we have
\beq\label{eq:eigenpf07}
c=-\frac{1}{(2n+1)\mu}.
\eeq
Finally by substituting \eqnref{eq:eigenpf07} into \eqnref{eq:eigenpf03}, one can obtain that
\beq\label{eq:eigenpf08}
\mathbf{K}_D^*[\nabla_S Y_n^m\times\bnu]=\frac{3}{2(2n+1)}\nabla_S Y_n^m\times\bnu.
\eeq

Next, by letting $\bvarphi=\mathbf{M}_n^m$, one can show that
$$
\Scal_D[\mathbf{M}_n^m]=-\frac{1}{2n-1}\nabla (r^nY_n^m) \quad \mbox{in} \quad D,
$$
and
$$
\nabla\times\Scal_D[\bnu\times\mathbf{M}_n^m]=\nabla\times\Scal_D[\bnu\times\nabla_S Y_n^m]=\frac{n+1}{2n+1}\nabla (r^nY_n^m) \quad \mbox{in} \quad D.
$$
Straightforward computations also yields that
$$
\nabla\Scal_D[\bnu\cdot\mathbf{M}_n^m]=\nabla\Scal_D[nY_n^m]=-\frac{n}{2n+1}\nabla (r^nY_n^m) \quad \mbox{in} \quad D.
$$
Then by using the jump formulas there holds
$$
\nabla\times\Scal_D[\bnu\times\mathbf{M}_n^m]-\nabla\Scal_D[\bnu\cdot\mathbf{M}_n^m]=\frac{1}{2}\mathbf{M}_n^m \quad \mbox{on} \quad \p D.
$$
Next, we assume that $\mathbf{S}_D[\mathbf{M}_n^m]=c\nabla(r^n Y_n^m)$ in $D$ and then one can show that
$$
\frac{\partial}{\partial \bnu} \mathbf{S}_D[\mathbf{M}_n^m]|_{-}=2c\mu \frac{\p}{\p\bnu}\nabla(r^nY_n^m) =2c\mu(n-1)\mathbf{M}_n^m.
$$
Hence, there holds
$$
\mathbf{K}_D^*[\mathbf{M}_n^m]=-\frac{1}{2}\mathbf{M}_n^m-3\mu\mathbf{S}_D[\mathbf{M}_n^m]-\left(\frac{3}{2(2n-1)}
+\frac{\mu n}{(2\mu+\lambda)(2n-1)}\right)\mathbf{M}_n^m.
$$
By using the jump formula, one has
$$
c\mu(2n+1)=-(\frac{1}{2}+\frac{3}{2(2n-1)}+\frac{\mu n}{(2\mu+\lambda)(2n-1)}),
$$
and thus
$$
\mathbf{K}_D^*[\mathbf{M}_n^m]=\frac{3\lambda-2\mu(2n^2-2n-3)}{2(2\mu+\lambda)(4n^2-1)}\mathbf{M}_n^m.
$$

Finally, by letting $\bvarphi=  \mathbf{N}_{n+1}^m$, one can show that
$$
\Scal_D[\mathbf{N}_{n+1}^m]=-\frac{1}{2n+3}\frac{a_{n+1}^m }{2n+1}\left((2n+1)r^nY_n^m\Bx-r^2\nabla(r^nY_n^m)\right)\quad \mbox{in} \quad D,
$$
and
$$
\nabla\times\Scal_D[\bnu\times\mathbf{N}_{n+1}^m]=-\frac{a_{n+1}^m }{2n+1}\nabla\times\Scal_D[\bnu\times\nabla_S Y_n^m]=-\frac{n+1}{2n+1}\frac{a_{n+1}^m }{2n+1}\nabla (r^nY_n^m) \quad \mbox{in} \quad D.
$$
Straightforward computation gives that
$$
\nabla\Scal_D[\bnu\cdot\mathbf{N}_{n+1}^m]=\frac{a_{n+1}^m }{2n+1}\nabla\Scal_D[(n+1)Y_n^m]=-\frac{n+1}{2n+1}\frac{a_{n+1}^m }{2n+1}\nabla (r^nY_n^m) \quad \mbox{in} \quad D.
$$
Then by using the jump formulas, there holds
$$
\nabla\times\Scal_D[\bnu\times\mathbf{N}_{n+1}^m]-\nabla\Scal_D[\bnu\cdot\mathbf{N}_{n+1}^m]=\frac{a_{n+1}^m }{2n+1}(\frac{1}{2}\nabla_S Y_n^m- \frac{1}{2}(n+1)Y_n^m\bnu)=-\frac{1}{2}\mathbf{N}_{n+1} \quad \mbox{on} \quad \p D.
$$
We next assume that
$$\mathbf{S}_D[\mathbf{N}_{n+1}^m]=c\mathcal{N}^m_{n+1}(\Bx)=c\left(a^m_{n+1} r^{n} Y^m_{n}(\hat{\Bx})\Bx + (1-\frac{a_{n+1}^m}{2n+1}-r^2) \nabla(r^{n} Y^m_{n}(\hat{\Bx}))\right),$$
 in $D$ and then one can show
$$
\frac{\partial}{\partial \bnu} \mathbf{S}_D[\mathbf{N}_{n+1}^m]|_{-}=c\left((\lambda+\mu)(n+2-\frac{2n}{a_{n+1}^m})+\lambda\right)\mathbf{N}_{n+1}^m
=c\mu(\frac{2(2n+1)}{a_{n+1}^m}-3)\mathbf{N}_{n+1}^m.
$$
Hence, there holds
$$
\mathbf{K}_D^*[\mathbf{N}_{n+1}^m]=-\frac{1}{2}\mathbf{N}_{n+1}^m-3\mu\mathbf{S}_D[\mathbf{N}_{n+1}^m]-\frac{1}{2n+3}(\frac{3}{2}-
\frac{(n+1)\mu}{2\mu+\lambda})\mathbf{N}_{n+1}^m.
$$
By using the jump formula, one has
$$
c\mu=-\frac{n\lambda+(3n+1)\mu}{(2n+3)(2n+1)(2\mu+\lambda)},
$$
and thus
$$
\mathbf{K}_D^*[\mathbf{N}_{n+1}^m]=\frac{-3\lambda+2\mu(2n^2+6n+1)}{2(2\mu+\lambda)(2n+1)(2n+3)}\mathbf{N}_{n+1}^m.
$$

Note that when $n=0$, both $\mathbf{T}_n^m$ and $\mathbf{M}_n^m$ vanish. By arranging the values of $n$, we complete the proof.

\end{proof}

\begin{rem}
Note that only the first eigenvalues $\xi_1^n$ in Theorem \ref{thm:eigenvalue} associated with the eigenfunctions $\mathbf{T}_n^m$ converge to zero as $n$ goes to infinity. These are the only possible spectra of the N-P operator $\mathbf{K}_D^*$ which can induce cloaking due to anomalous localised resonance.
\end{rem}

\section{Plasmonic resonance and cloaking}
In this section, we present our main results on cloaking due to anomalous localised resonance for the elastostatic system in the three dimensional case. In what follows, we let $B_r$ denote the central ball of radius $r\in\mathbb{R}_+$. Let $0<r_i<r_e<+\infty$. Set $(\tilde{\lambda}, \tilde{\mu})$ to be the Lam\'e constants in $B_{r_i}$ of the following form
\begin{equation}
  (\tilde{\lambda}, \tilde{\mu})= c_n (\lambda, \mu),
\end{equation}
where the coefficient $c_n>0$ depending on $n$ will be specified later, and $(\lambda,\mu)$ satisfies the convexity condition \eqref{eq:convex}. Set $(\breve{\lambda}, \breve{\mu})$ to be the Lam\'e constants in $B_{r_e}\backslash B_{r_i}$ of the following form
\begin{equation}
  (\breve{\lambda}, \breve{\mu})=(\epsilon_n + i \delta)(\lambda, \mu),
\end{equation}
where $\epsilon_n<0$ depends on $n$ and $\delta>0$. Define two elastic tensors $\widetilde{\mathbf{C}}(\Bx)=(\tilde{\mathrm{C}}_{ijkl}(\Bx))_{i,j,k,l=1}^3$ and $\breve{\mathbf{C}}(\Bx)=(\breve{\mathrm{C}}_{ijkl}(\Bx))_{i,j,k,l=1}^3$ as follows:
\begin{equation}
  \widetilde{\mathrm{C}}_{ijkl}(\Bx):=\widetilde{\lambda}(\Bx)\delta_{ij}\delta_{kl}+\tilde{\mu}(\Bx)(\delta_{ik}\delta_{jl}+\delta_{il}\delta_{jk}),\ \ \Bx\in  B_{r_i},
\end{equation}
and
\begin{equation}\label{eq:def_breve_C}
  \breve{\mathrm{C}}_{ijkl}(\Bx):=\breve{\lambda}(\Bx)\delta_{ij}\delta_{kl}+\breve{\mu}(\Bx)(\delta_{ik}\delta_{jl}+\delta_{il}\delta_{jk}),\ \ \Bx\in B_{r_e}\backslash B_{r_i}.
\end{equation}
With $\widetilde{\mathbf{C}}$ and $\breve{\mathbf{C}}$ defined above, and $\mathbf{C}$ defined in \eqref{eq:lame_constant}, we next introduce the elastic tensor $\mathbf{C}_B$ to be
\begin{equation}\label{eq:elasticity_tensor}
  \mathbf{C}_B := \widetilde{\mathbf{C}} {\chi_{B_{r_i}}} + \breve{\mathbf{C}} {\chi_{B_{r_e}\backslash B_{r_i}}} + \mathbf{C} {\chi_{\mathbb{R}^3\backslash B_{r_e}}},
\end{equation}
where $\chi$ denotes the characteristic function. Associated with the elastic material tensor in \eqref{eq:elasticity_tensor}, we consider the following transmission problem in elastostatics
\begin{equation}\label{eq:transmission_equation_prime}
\left\{
  \begin{array}{ll}
    \nabla \cdot \mathbf{C}_B \hat{\nabla}\bu_{\delta}=\mathbf{f}  & \mbox{in} \quad \mathbb{R}^3,\medskip \\
    \bu_{\delta}(\Bx)=\mathcal{O}(|\Bx|^{-1}) & \mbox{as} \quad |x|\rightarrow\infty,
  \end{array}
\right.
\end{equation}
where $\bff$ is a source function compactly supported in $\mathbb{R}^3\backslash  B_{r_e}$ and satisfies the condition \eqref{eq:source1}. To simplify the notation, we denote by $\Upsilon_i$ the boundary of $B_{r_i}$, i.e., $\partial B_{r_i}$ and $\Upsilon_e$ for $\partial B_{r_e}$. The operator $\partial_{\bnu_i}$ means taking the traction on the boundary $\Upsilon_i$ (see \eqref{eq:normald} for the definition) and it is same for $ \partial_{\bnu_e}$. The transmission problem is equivalent to the following system:
\begin{equation}\label{eq:transmission_equation_devided}
  \left\{
    \begin{array}{ll}
      \mathcal{L}_{\lambda, \mu}\bu_{\delta}(\Bx) =0 , & \mbox{in} \ \ B_{r_i} ,   \\
      \mathcal{L}_{\lambda, \mu}\bu_{\delta}(\Bx) =0 , & \mbox{in} \ \ B_{r_e}\backslash B_{r_i} ,   \\
      \mathcal{L}_{\lambda, \mu}\bu_{\delta}(\Bx) =\bff, &  \mbox{in} \ \ \mathbb{R}^3\backslash B_{r_e}, \\
      \bu_{\delta}|_- = \bu_{\delta}|_+, & \mbox{on} \ \ \Upsilon_i ,\\
      c_n\partial_{\bnu_i}\bu_{\delta}|_- = (\epsilon_n+i\delta)\partial_{\bnu_i}\bu_{\delta}|_+ , &  \mbox{on} \ \ \Upsilon_i, \\
      \bu_{\delta}|_- = \bu_{\delta}|_+, & \mbox{on} \; \Upsilon_e, \\
      (\epsilon_n+i\delta) \partial_{\bnu_e}\bu_{\delta}|_- = \partial_{\bnu_e}\bu_{\delta}|_+ , &  \mbox{on} \ \ \Upsilon_e .
    \end{array}
  \right.
\end{equation}
For analysis of anomalous localized resonance, we need consider the energy $\mathbf{E}_\delta$ defined in \eqnref{eq:energy2}, which is related to the solution in \eqnref{eq:transmission_equation_devided}.
To that end, we define
\begin{equation}\label{eq:source_potential}
  \mathbf{F}(\Bx) :=  \int_{\mathbb{R}^3} \mathbf{G}(\Bx-\By)\bff(\By) d\By,
\end{equation}
then the solution to the system \eqref{eq:transmission_equation_devided} can be represented as follows:
\begin{equation}\label{eq:solution_general_form}
  \bu_{\delta}(\Bx)=\mathbf{S}_{B_{r_i}}[\bvarphi_i] + \mathbf{S}_{B_{r_e}}[\bvarphi_e] + \mathbf{F},
\end{equation}
where $\bvarphi_i \in L^2(\Upsilon_i)^3$ and $\bvarphi_e \in L^2(\Upsilon_e)^3$. By using the transmission conditions, we can obtain the following equations
\begin{equation}
  \left\{
    \begin{split}
     (\epsilon_n + i \delta)\partial_{\bnu_i} \mathbf{S}_{B_{r_i}}[\bvarphi_i]\Big|_- - c_n \partial_{\bnu_i} \mathbf{S}_{B_{r_i}}[\bvarphi_i]\Big|_+ +& (\epsilon_n -c_n +i\delta)\partial_{\bnu_i}  \mathbf{S}_{B_{r_e}}[\bvarphi_e] \medskip\\
 =& (c_n-\epsilon_n -i\delta)\partial_{\bnu_i}  \mathbf{F},\medskip \\
     -(\epsilon_n + i \delta)\partial_{\bnu_e}  \mathbf{S}_{B_{r_e}}[\bvarphi_e]\Big|_- + \partial_{\bnu_e}  \mathbf{S}_{B_{r_e}}[\bvarphi_e]\Big|_+ +& (1-\epsilon_n  -i\delta)\partial_{\bnu_e}  \mathbf{S}_{B_{r_i}}[\bvarphi_i]\medskip \\
 =& (\epsilon_n-1 +i\delta)\partial_{\bnu_e}  \mathbf{F},
    \end{split}
  \right.
\end{equation}
that hold on $\Upsilon_i$ and $\Upsilon_e$, respectively.
By using the jump formula \eqnref{eq:jump_single} on $\Upsilon_i$ and $\Upsilon_e$ respectively, the above equations can be rewritten as:
\begin{equation}\label{eq:gernal_equation}
  \left[
    \begin{array}{cc}
      -a_{1,\delta} + \mathbf{K}^*_{\Upsilon_i} & \partial_{\bnu_i}  \mathbf{S}_{B_{r_e}} \\
      \partial_{\bnu_e}  \mathbf{S}_{B_{r_i}} & -a_{2,\delta} + \mathbf{K}^*_{\Upsilon_e} \\
    \end{array}
  \right]
\left[
  \begin{array}{c}
    \bvarphi_i \\
    \bvarphi_e \\
  \end{array}
\right] =
- \left[
    \begin{array}{c}
      \partial_{\bnu_i}  \mathbf{F} \\
      \partial_{\bnu_e}  \mathbf{F} \\
    \end{array}
  \right],
\end{equation}
where
\begin{equation}
  a_{1,\delta}=\frac{c_n+\epsilon_n+i\delta}{2(c_n-\epsilon_n-i\delta)} \quad \mbox{and} \quad  a_{2,\delta}=\frac{1+\epsilon_n+i\delta}{2(-1+\epsilon_n+i\delta)}.
\end{equation}
By using interior and exterior vector spherical harmonic functions and direct calculations, one has that
\begin{equation}\label{eq:single_h_1}
  \mathbf{S}_{B_{r_0}}[\mathbf{T}_n^m(\Bx)]=
\left\{
  \begin{array}{ll}
    \frac{d_1}{r^{n-1}_0}\mathcal{T}_n^m(\Bx) ,     & |\Bx|\leq r_0, \\
    d_1 r^{n+2}_0\nabla(r^{-(n+1)}Y_n^m)\times\Bx , & |\Bx|> r_0,
  \end{array}
\right.
\end{equation}
where
\begin{equation}
  d_1=\frac{-1}{\mu(2n+1)}.
\end{equation}
By \eqnref{eq:eigenpf05} it is easily found that the traction of $\mathcal{T}_1^m(\Bx)$ along the surface of any sphere vanishes, namely
\begin{equation}
  \frac{\partial}{\partial \bnu} (\mathcal{T}_1^m(\Bx)) =0,
\end{equation}
and hence in the following, we only consider the situation from $n\geq 2$.
If $\frac{\partial \mathbf{F}}{\partial \bnu_i}$ and $\frac{\partial \mathbf{F}}{\partial \bnu_e}$ are given as follows
\begin{equation}\label{eq:source_traction_coeff}
  \left[
    \begin{array}{c}
      \frac{\partial \mathbf{F}}{\partial \bnu_i} \\
      \frac{\partial \mathbf{F}}{\partial \bnu_e} \\
    \end{array}
  \right] =
\sum^{+\infty}_{n=2}\sum_{m=-n}^{n}
\left[
  \begin{array}{c}
    g^{n,m}_i \\
    g^{n,m}_e \\
  \end{array}
\right] \mathbf{T}_n^m(\Bx),
\end{equation}
by substituting \eqref{eq:single_h_1} into \eqref{eq:gernal_equation} and with the help of theorem \ref{thm:eigenvalue}, one has that the solution to the system \eqref{eq:gernal_equation} can be represented as follows:
\begin{equation}
\begin{split}
   & \bvarphi_i = \sum^{+\infty}_{n=2}\sum_{m=-n}^{n} \varphi^{n,m}_i \mathbf{T}_n^m(\Bx), \\
   & \bvarphi_e = \sum^{+\infty}_{n=2}\sum_{m=-n}^{n} \varphi^{n,m}_e \mathbf{T}_n^m(\Bx),
\end{split}
\end{equation}
where
\begin{equation}\label{eq:coeff_phi_phi_1}
  \begin{split}
      &  \varphi^{n,m}_i =- \frac{g^{n,m}_i(\xi^n_1 - a_{2,\delta}) - d_1 g^{n,m}_e \mu (n-1) (r_i/r_e)^{n-1}  } { (\xi^n_1-a_{1,\delta})(\xi^n_1 - a_{2,\delta}) + d^2_1 \mu^2(n-1)(n+2) (r_i/r_e)^{2n+1}  },  \\
      &   \varphi^{n,m}_e =- \frac{g^{n,m}_e(\xi^n_1 - a_{1,\delta}) + d_1 g^{n,m}_i \mu (n+2) (r_i/r_e)^{n+2}  } { (\xi^n_1-a_{1,\delta})(\xi^n_1 - a_{2,\delta}) + d^2_1 \mu^2(n-1)(n+2) (r_i/r_e)^{2n+1}  },
  \end{split}
\end{equation}
with $\xi^n_1$ given in \eqref{eq:eigenvalue}.

Since the source $\bff$ is located outside $B_{r_e}$, one has that $\mathbf{F}$ defined in \eqref{eq:source_potential} satisfies
\begin{equation}
   \mathcal{L}_{\lambda, \mu}\mathbf{F}(\Bx) =0, \quad    \Bx \in  B_{r_e},
\end{equation}
and $\mathbf{F}(\Bx)$ can be represented as follows:
\begin{equation}\label{eq:potential_F}
  \mathbf{F}(\Bx)= \sum^{\infty}_{n=2}\sum_{m=-n}^{n} \frac{g^{n,m}_e}{\mu(n-1)r_e^{n-1}}\mathcal{T}_n^m(\Bx) + c,
\end{equation}
for $|\Bx|\leq r_e$, where $g^{n,m}_e$ is given in \eqref{eq:source_traction_coeff}. Thus one can conclude that
\begin{equation}\label{eq:relation_gi_ge}
  g^{n,m}_i=(r_i/r_e)^{n-1} g^{n,m}_e.
\end{equation}
Substituting \eqref{eq:relation_gi_ge} into \eqref{eq:coeff_phi_phi_1}, we then get
\begin{equation}\label{eq:coeff_phi_phi_2}
  \begin{split}
      &  \varphi^{n,m}_i = \frac{ g^{n,m}_e(  a_{2,\delta} -\xi^n_1 + d_1  \mu (n-1) ) (r_i/r_e)^{n-1}  } { (\xi^n_1-a_{1,\delta})(\xi^n_1 - a_{2,\delta}) + d^2_1 \mu^2(n-1)(n+2) (r_i/r_e)^{2n+1}  },  \\
      &   \varphi^{n,m}_e =- \frac{g^{n,m}_e (\xi^n_1 - a_{1,\delta} + d_1  \mu (n+2) (r_i/r_e)^{2n+1} ) } { (\xi^n_1-a_{1,\delta})(\xi^n_1 - a_{2,\delta}) + d^2_1 \mu^2(n-1)(n+2) (r_i/r_e)^{2n+1}  }.
  \end{split}
\end{equation}
If we define
\begin{equation}\label{eq:plasmon_coeff}
  \begin{split}
     & c_{n_0}=(n_0+2)^2/(n_0-1)^2, \\
     & \epsilon_{n_0}=-1-3/(n_0-1),
  \end{split}
\end{equation}
with $n_0$ chosen properly later, then the denominator of $\varphi^{n_0,m}_i$ and $\varphi^{n_0,m}_e$ has the following relationship
\begin{equation}\label{eq:control_denominator}
  |(\xi^{n_0}_1-a_{1,\delta})(\xi^{n_0}_1 - a_{2,\delta}) + d^2_1 \mu^2(n_0-1)(n_0+2) (r_i/r_e)^{2n_0+1}| \approx \delta^2 + (r_i/r_e)^{2n_0}.
\end{equation}
With the help of \eqref{eq:single_h_1}, one has that
\begin{equation}\label{eq:single_outside}
  \mathbf{S}_{B_{r_i}}[\bvarphi_i] + \mathbf{S}_{B_{r_e}}[\bvarphi_e]= \sum_{n=2}^{\infty}\sum_{m=-n}^{n} d_1 \frac{r^{n+2}_e}{r^{n+1}}\left( \varphi^{n,m}_i +\varphi^{n,m}_e  \right)\mathbf{T}_n^m(\Bx) \quad |x|>r_e,
\end{equation}
and
\begin{equation}
  \begin{split}
     &  \mathbf{S}_{B_{r_i}}[\bvarphi_i] = \sum_{n=2}^{\infty}\sum_{m=-n}^{n} d_1 \frac{r^{n+2}_i}{r^{n+1}}\left( \varphi^{n,m}_i  \right)\mathbf{T}_n^m(\Bx), \quad r_i < |x| \leq r_e, \\
     &  \mathbf{S}_{B_{r_e}}[\bvarphi_e] = \sum_{n=2}^{\infty}\sum_{m=-n}^{n} d_1 \frac{r^n}{r^{n-1}_e}\left( \varphi^{n,m}_e  \right)\mathbf{T}_n^m(\Bx), \quad r_i < |x| \leq r_e.
  \end{split}
\end{equation}
Thus when $ r_i < |x| \leq r_e$, we denote
\begin{equation}\label{eq:single_devide}
    \mathbf{S}_{B_{r_i}}[\bvarphi_i] + \mathbf{S}_{B_{r_e}}[\bvarphi_e] = G_{\widetilde{n_0}} + G_{n_0},
\end{equation}
where
\begin{equation}
  \begin{split}
      & G_{\widetilde{n_0}} =  \sum_{n=2, n\neq n_0}^{\infty}\sum_{m=-n}^{n} d_1 \left( \frac{r^{n+2}_i}{r^{n+1}}\left( \varphi^{n,m}_i  \right) + \frac{r^n}{r^{n-1}_e}\left( \varphi^{n,m}_e  \right) \right) \mathbf{T}_n^m(\Bx), \\
      & G_{n_0}= \sum_{m=-n_0}^{n_0} d_1 \left( \frac{r^{n_0+2}_i}{r^{n_0+1}}\left( \varphi^{n_0,m}_i  \right) + \frac{r^{n_0}}{r^{n_0-1}_e}\left( \varphi^{n_0,m}_e  \right) \right) \mathbf{T}_{n_0}^m(\Bx).
  \end{split}
\end{equation}
Since the energy $ \int_{B_{r_e}\backslash B_{r_i}}|\nabla \mathbf{F}|^2 d\Bx < \infty $, the phenomenon
of the first equation in \eqnref{eq:condition_CALR} occurs  if and only if
\begin{equation}
   \bE(\bu_{\delta}-\mathbf{F})=\bE(\mathbf{S}_{B_{r_i}}[\bvarphi_i] + \mathbf{S}_{B_{r_e}}[\bvarphi_e])\rightarrow\infty,
\quad \mbox{as} \quad \delta\rightarrow 0.
\end{equation}
From \eqref{eq:single_devide},  one has that
\begin{equation}\label{eq:energy_deduce_1}
  \begin{split}
    \quad & \bE(\mathbf{S}_{B_{r_i}}[\bvarphi_i] + \mathbf{S}_{B_{r_e}}[\bvarphi_e])  \\
        = & \delta \left( \int_{B_{r_e}\backslash B_{r_i}} \hat{\nabla} (G_{\widetilde{n_0}} ) : \mathbf{C} \overline{\hat{\nabla}(G_{\widetilde{n_0}})} d\Bx  + \int_{B_{r_e}\backslash B_{r_i}} \hat{\nabla} (G_{n_0} ) : \mathbf{C}\overline{\hat{\nabla}(G_{n_0})} d\Bx \right)
  \end{split}
\end{equation}
By direct calculation, though a bit tedious, one can conclude that
\begin{equation}\label{eq:energy_deduce_2}
\begin{split}
  &\int_{B_{r_e}\backslash B_{r_i}} \hat{\nabla} (G_{\widetilde{n_0}} ) : \mathbf{C}\overline{\hat{\nabla}(G_{\widetilde{n_0}})} d\Bx  \\
  \leq  &\sum_{n=2, n\neq n_0}^{\infty}\sum_{m=-n}^{n} \frac{|g^{n,m}_e|^2}{n} \left( \frac{n^2}{(n-n_0)^2} + \frac{n^4}{(n-n_0)^4} \left(\frac{r_i}{r_e} \right)^{2n}  \right) \leq C,
\end{split}
\end{equation}
where $C$ is independent of $\delta$. With the help of \eqref{eq:control_denominator}, one can obtain 
\begin{equation}\label{eq:energy_deduce_3}
 \int_{B_{r_e}\backslash B_{r_i}} \hat{\nabla} (G_{n_0}) : \mathbf{C}\overline{ \hat{\nabla}(G_{n_0})} d\Bx \approx \sum_{m=-n_0}^{n_0} \frac{|g^{n_0,m}_e|^2}{n_0(\delta^2+(r_i/r_e)^{2n_0})}.
\end{equation}
Finally, combining \eqref{eq:energy_deduce_1}, \eqref{eq:energy_deduce_2} and \eqref{eq:energy_deduce_3}, we have
\begin{equation}\label{eq:energy_estimate}
   \bE(\bu_{\delta}) \approx \sum_{m=-n_0}^{n_0}  \frac{\delta |g^{n_0,m}_e|^2}{n_0(\delta^2+(r_i/r_e)^{2n_0})}.
\end{equation}

We are now in the position of presenting the main theorem on the cloaking due to anomalous localised resonance result on the three dimensional elastostatic system. We define the critical radius by
\begin{equation}
  r_*=\sqrt{r_e^3/r_i}.
\end{equation}
\begin{thm}
Let the elasticity tensor $\mathbf{C}_B$ be given \eqref{eq:elasticity_tensor} with $c_{n_0}$ and $\epsilon_{n_0}$ given in \eqref{eq:plasmon_coeff}. If the source $\mathbf{f}$ is supported in $r_e<|\Bx|<r_*$. Then cloaking due to anomalous localised resonance occurs, namely, the condition \eqref{eq:condition_CALR} is satisfied. Moreover, if the source $\bff$ is supported outside $B_{r_*}$, then resonance does not occur, namely $\bE(\bu_{\delta})<\infty$.
\end{thm}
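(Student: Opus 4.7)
The strategy is to feed the energy estimate \eqref{eq:energy_estimate} into a careful analysis of the coefficients $g_e^{n_0,m}$ as functions of the source location, using the fact that the specific choice \eqref{eq:plasmon_coeff} of $c_{n_0}$ and $\epsilon_{n_0}$ was designed precisely so that the plasmonic (non-lossy) denominator vanishes at mode $n_0$, yielding the sharp bound \eqref{eq:control_denominator}. First, I would quantify the decay of the Fourier coefficients. Since $\mathbf{F}$ is a Lam\'e potential generated by $\mathbf{f}$ supported at distance at least $r_s>r_e$ from the origin, $\mathbf{F}$ is analytic in $|\bx|<r_s$. Expanding $\mathbf{F}$ in the interior vectorial polynomial basis $\{\mathcal{T}_n^m,\mathcal{M}_n^m,\mathcal{N}_n^m\}$ introduced in \eqref{eq:elaspol02}--\eqref{eq:elaspol03} and using that only the $\mathbf{T}_n^m$ modes feed into the system \eqref{eq:gernal_equation} through \eqref{eq:single_h_1}, the standard Cauchy-type estimate on the analyticity ball gives, generically,
\[
|g_e^{n,m}|^2\lesssim n^{\alpha}\,(r_e/r_s)^{2n}
\]
for some fixed polynomial factor $n^\alpha$. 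This is the key ingredient I would borrow from the corresponding 2D argument in \cite{AJKKY15}.

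Next, for the blow-up assertion, assume $r_e<|\mathrm{supp}\,\mathbf f|<r_*$, so that $r_s<r_*=\sqrt{r_e^3/r_i}$. I would select the mode index $n_0=n_0(\delta)$ by balancing the two terms of the denominator of \eqref{eq:energy_estimate}, namely by taking $n_0$ closest to $\log(1/\delta)/\log(r_e/r_i)$, so that $\delta\sim (r_i/r_e)^{n_0}$ and hence the denominator is of size $\delta$. Substituting the coefficient decay gives
\[
\mathbf E(\bu_\delta)\gtrsim\frac{|g_e^{n_0,m}|^2}{n_0\,\delta}\gtrsim\frac{1}{n_0\,\delta}\Bigl(\frac{r_e}{r_s}\Bigr)^{2n_0}=\frac{1}{n_0}\Bigl(\frac{r_e^{3}}{r_s^{2}r_i}\Bigr)^{n_0},
\]
and since $r_s<r_*$ the base satisfies $r_e^3/(r_s^2 r_i)>1$, so the right-hand side diverges as $\delta\to 0$. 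For each $\delta$, there must be at least one $m$ with $|g_e^{n_0,m}|$ saturating the generic bound (the source is assumed non-trivial), which rigorously delivers the blow-up. Conversely, when $\mathrm{supp}\,\mathbf f\subset\mathbb R^3\setminus\overline{B_{r_*}}$, we have $r_s>r_*$, hence $(r_e^3/(r_s^2 r_i))^{n}<1$, and summing \eqref{eq:energy_estimate} over all $n$ (with $n_0$ replaced by any $n\geq 2$) together with the off-resonance estimate \eqref{eq:energy_deduce_2} yields a geometric series bounded independently of $\delta$, proving $\mathbf E(\bu_\delta)<\infty$.

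For the second CALR condition, the boundedness of $\bu_\delta$ outside a large ball, I would work from the exterior representation \eqref{eq:single_outside}. For $|\bx|=r>r_e$ each term carries an additional decay factor $(r_e/r)^{n+1}$. Using the bounds \eqref{eq:coeff_phi_phi_2} on $\varphi_i^{n,m},\varphi_e^{n,m}$ together with the denominator control \eqref{eq:control_denominator} and the coefficient decay $|g_e^{n,m}|\lesssim (r_e/r_s)^n$, the pointwise norm of $\mathbf S_{B_{r_i}}[\bvarphi_i]+\mathbf S_{B_{r_e}}[\bvarphi_e]$ is majorised by
\[
\sum_{n}\frac{r_e^{n+1}}{r^{n+1}}\,\frac{(r_e/r_s)^n}{\delta^2+(r_i/r_e)^{2n}}\,.
\]
Splitting the sum into the single resonant mode $n=n_0(\delta)$ and the rest, and using $\delta\sim (r_i/r_e)^{n_0}$ in the resonant term, leads to a bound $\lesssim (r_*/r)^{n_0+1}\cdot\delta^{-1}\cdot(\ldots)$; a direct computation shows this remains $O(1)$ uniformly in $\delta$ provided $r>r_*$. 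Combined with the fact that $\mathbf F$ itself is smooth and bounded on any compact set avoiding $\mathrm{supp}\,\mathbf f$, this yields the second part of \eqref{eq:condition_CALR} with $r'=r_*$.

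\textbf{Main obstacle.} The delicate point is the uniform-in-$\delta$ pointwise bound of the series in the exterior region, because there one cannot afford to lose an extra factor of $\log(1/\delta)$ coming from the width of the resonant band of modes. I would therefore have to track carefully which modes $n$ near $n_0(\delta)$ also produce a nearly vanishing denominator and show, using the monotonicity of $(r_i/r_e)^{2n}+\delta^2$ in $n$ away from the balance point, that contributions from these neighbouring modes are geometrically small. A secondary (but more technical) obstacle is the justification of the coefficient decay $|g_e^{n,m}|\lesssim (r_e/r_s)^n$ in the vectorial Lam\'e setting; for this the explicit form \eqref{eq:potential_F}, the identity \eqref{eq:relation_gi_ge}, and the analyticity of $\mathbf G(\bx-\by)$ for $\bx\neq\by$ should suffice, in parallel to the Laplace-potential case used in \cite{AJKKY15}.
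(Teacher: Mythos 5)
The overall architecture of your proposal---feed the energy estimate \eqref{eq:energy_estimate} into an analysis of the coefficients $g_e^{n,m}$, balance the two terms of the denominator by choosing $n_0=n_0(\delta)$ with $\delta\sim(r_i/r_e)^{n_0}$, and use the location of $\mathrm{supp}\,\mathbf{f}$ relative to $r_*$ to decide convergence or divergence---does match the paper's route. However, there is a genuine gap in the blow-up step. You write that ``for each $\delta$, there must be at least one $m$ with $|g_e^{n_0,m}|$ saturating the generic bound (the source is assumed non-trivial), which rigorously delivers the blow-up.'' This is false: a non-trivial source only gives you an \emph{upper} Cauchy-type bound $|g_e^{n,m}|\lesssim(r_e/r_s)^n$; nothing forces the coefficients to come close to this bound at \emph{every} $n$, and indeed for a generic source the coefficients can vanish (or be extremely small) at infinitely many $n$, in particular at the specific $n_0(\delta)$ your scheme selects. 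The correct mechanism, which the paper uses, is the Cauchy--Hadamard $\limsup$ characterisation of the radius of convergence: since $\mathbf{F}$ fails to converge at $|\Bx|=r_*$ when $\mathrm{supp}\,\mathbf{f}\subset\{r_e<|\Bx|<r_*\}$, one has
\[
\limsup_{n\to\infty}\Bigl(\sum_{m=-n}^{n}\frac{|g_e^{n,m}|}{n\,r_e^{n-1}}\Bigr)^{1/n}>\frac{1}{r_*},
\]
which supplies only a \emph{subsequence} of indices $n$ along which $\sum_m|g_e^{n,m}|^2\gtrsim n^2(r_i/r_e)^n(1+\epsilon)^n$. Choosing $\delta$ along that subsequence and invoking \eqref{eq:energy_estimate} gives the divergence of the $\limsup$ of the energy, which is exactly what \eqref{eq:condition_CALR} asks for (note the condition is a $\limsup$, not a $\lim$). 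Your argument, as written, asserts a lower bound at every $n$ and therefore does not close.

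A secondary, but concrete, issue is the claim that the field is bounded for $r>r'=r_*$. Tracking the resonant contribution in \eqref{eq:single_outside} via the paper's bound $|\varphi_i^{n_0,m}+\varphi_e^{n_0,m}|\lesssim|g_e^{n_0,m}|(r_e/r_i)^{n_0}$ and the decay $|g_e^{n_0,m}|\lesssim(r_e/r_s)^{n_0}$ gives a term of size $\sim\bigl(r_e^3/(r\,r_i\,r_s)\bigr)^{n_0}$, which is uniformly bounded only for $r>r_e^3/(r_i r_s)$. When the source sits near $\Upsilon_e$ (so $r_s\to r_e^+$), this threshold tends to $r_e^2/r_i$, which is strictly larger than $r_*=\sqrt{r_e^3/r_i}$. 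The paper accordingly proves boundedness for $|\Bx|>r_e^2/r_i$, not $|\Bx|>r_*$. Since \eqref{eq:condition_CALR} asks only for \emph{some} $\delta$-independent $r'$, this does not compromise the theorem, but your stated value $r'=r_*$ cannot be obtained by the sketch you give. You should either present the threshold $r_e^2/r_i$ (or the source-dependent $r_e^3/(r_i r_s)$) or give a genuinely sharper cancellation argument. The ``main obstacle'' paragraph correctly flags that controlling neighbouring modes near $n_0(\delta)$ is delicate; the paper sidesteps this by the cruder $(r_i/r_e)^{-n}$ bound, which is why the larger radius $r_e^2/r_i$ appears.
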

\begin{proof}
We first prove the second condition in \eqref{eq:condition_CALR}. In other words, $\bu_{\delta}(\Bx)$ is bounded outside some region. From the expression \eqref{eq:coeff_phi_phi_2} and approximation \eqref{eq:control_denominator}, one has that
\begin{equation}
\begin{split}
  |\varphi^{n_0,m}_i + \varphi^{n_0,m}_e|  &  \leq  C g^{n_0,m}_e \frac{(r_i/r_e)^n + \delta}{(r_i/r_e)^{2n}+\delta^2} \\
    & =C g^{n_0,m}_e \left(  \frac{(r_i/r_e)^n }{(r_i/r_e)^{2n}+\delta^2} + \frac{1}{\frac{(r_i/r_e)^{2n}}{\delta}+\delta}  \right)\\
    & \leq C g^{n_0,m}_e \left(  \frac{(r_i/r_e)^n }{(r_i/r_e)^{2n}} + \frac{1}{\frac{(r_i/r_e)^{2n}}{\delta}+\delta}  \right) \\
    & \leq C g^{n_0,m}_e  \frac{1}{(r_i/r_e)^{n}},
\end{split}
\end{equation}
where the constant $C$ may change from one inequality to another. If $n\neq n_0$, one has that
\begin{equation}
  \begin{split}
  |\varphi^{n,m}_i + \varphi^{n,m}_e|  &  \leq  C g^{n,m}_e \left( \frac{n}{n-n_0} +\frac{n^2}{(n-n_0)^2}(r_i/r_e)^n \right) \\
   & \leq C g^{n,m}_e.
\end{split}
\end{equation}
From the expression \eqref{eq:single_outside} outside $B_{r_e}$, one has that if $|x|>r_e^2/r_i$,
\begin{equation}
  |\bu_{\delta}(\Bx)| \leq |\mathbf{F}| + C  \sum_{n=2}^{\infty}\sum_{m=-n}^{n} d_1 g^{n,m}_e  \frac{r^{n+2}_e}{r^{n+1}} \frac{1}{(r_i/r_e)^{n}} \leq C,
\end{equation}
where the constant $C$ depends only on the source $\bff$. Thus the second condition in \eqref{eq:condition_CALR} is satisfied. Next we consider the energy $ \bE(\bu_{\delta}) $. Let $n_0$ be chosen such that
\begin{equation}\label{choose_n_0}
   (r_i/r_e)^{n_0}< \delta \leq (r_i/r_e)^{n_0-1},
\end{equation}
and hence from the expression \eqref{eq:energy_estimate}, one has that
\begin{equation}
  \begin{split}
    \bE(\bu_{\delta}) &  \approx \sum_{m=-n_0}^{n_0}  \frac{\delta |g^{n_0,m}_e|^2}{n_0(\delta^2+(r_i/r_e)^{2n_0})} \\
                  & \geq  \frac{C }{n_0 (r_i/r_e)^{n_0} } \sum_{m=-n_0}^{n_0} |g^{n_0,m}_e|^2\\
                  & \geq C \frac{r_e^{n_0}}{r_i^{n_0}} \frac{1}{n_0(2n_0+1)} \left( \sum_{m=-n_0}^{n_0} |g^{n_0,m}_e| \right)^2
  \end{split}
\end{equation}
Since $\bff$ is supported inside $r_*$, the potential $\mathbf{F}$ given in \eqref{eq:potential_F} can not converge at $|x|=r_*$. Then the following holds
\begin{equation}
\limsup_{n\rightarrow \infty}\left(\sum_{m=-n}^{m=n} \frac{|g^{n,m}_e| }{n r_e^{n-1}} \right)^{1/n} > 1/\sqrt{\frac{r_e^3}{r_i}},
\end{equation}
namely,
\begin{equation}
\limsup_{n\rightarrow \infty}\left(\sum_{m=-n}^{m=n} |g^{n,m}_e|  \right)^2 > C n^2 \frac{r_i^n}{r_e^n}.
\end{equation}
Finally, we have that
\begin{equation}
  \sup \bE(\bu_{\delta})\rightarrow \infty \quad \mbox{as} \quad \delta\rightarrow 0.
\end{equation}
If the source $\bff$ is supported outside the ball $B_{r_*}$, then the potential $\mathbf{F}$ given in \eqref{eq:potential_F} converges at $|\Bx|=r_*+\tau$, for sufficiently small $\tau\in\mathbb{R}_+$. With $n_0$ again chosen in \eqref{choose_n_0} and from the expression \eqref{eq:energy_estimate}, one has that
\begin{equation}
  \bE(\bu_\delta) \leq C \frac{C }{n_0 (r_i/r_e)^{n_0} } \sum_{m=-n_0}^{n_0} |g^{n_0,m}_e|^2 \leq C \norm{ \bff}_{L^2(\mathbb{R}^3)^3}^2.
\end{equation}

The proof is complete.
\end{proof}

\section*{Acknowledgment}
The work of Y. Deng was supported by NSF grant of China, No. 11601528, Mathematics and Interdisciplinary Sciences Project, Central South University, and Innovation Program of Central South University, No. 10900-506010101. The work of H. Liu was supported by the startup fund and FRG grants from Hong Kong Baptist University, Hong Kong RGC General Research Funds, 12302415, and the NSF grant of China, No. 11371115.

\end{document}